\theoremstyle{plain}
\newtheorem{theorem}{\bf Theorem}
\newtheorem{proposition}{\bf Proposition}
\newtheorem{lemma}{\bf Lemma}
\newtheorem{corollary}{\bf Corollary}
\theoremstyle{definition}
\newtheorem{definition}{\bf Definition}
\newtheorem{example}{\bf Example}
\newcommand		{\p}[1]	{\left(#1\right)}
\newcommand		{\pp}[1]	{\left[#1\right]}
\newcommand		{\abs}[1]{\left|#1\right|}
\newcommand 	{\norm}[1]{\left\lVert#1\right\rVert}
\renewcommand   {\P} {\mathbf P}
\newcommand    	{\G} {\mathbb G}
\renewcommand 	{\O} {\mathcal O}
\newcommand 	{\E} {\mathbf E}
\DeclareMathOperator 	{\var}{\mathbf{Var}}
\newcommand   	{\ber} {\mathrm{Ber}}
\newcommand    	{\1} {\mathbbm 1}
\newcommand 	{\er} {Erd\H{o}s-R\'enyi\ }
\newcommand 	{\ve} {\varepsilon}
\renewcommand 	{\t} {\mathbf t}
\newcommand 	{\aut} {\mathrm{Aut}}
\title[A Concentration Principle for Processes on Finite Graphs]
{A Random-Walk Concentration Principle\\ for Occupancy Processes on Finite Graphs}
\author{Davide Sclosa}
\author{Michel Mandjes}
\author{Christian Bick}
\date{\today}
\begin{document}

\begin{abstract}
This paper concerns discrete-time occupancy processes on a finite graph.
Our results can be formulated in two theorems, which are
stated for vertex processes, but also applied to edge process
(e.g., dynamic random graphs).
The first theorem shows that concentration of local state averages is controlled by a random walk on the graph.
The second theorem concerns concentration of polynomials of the vertex states.
For dynamic random graphs, this allows to estimate deviations of edge density, triangle density, and more general subgraph densities.
Our results only require Lipschitz continuity and hold for both dense and sparse graphs.
\end{abstract}

\maketitle


\section{Introduction}
\noindent
Occupancy processes form a broad class of stochastic processes on graphs,
covering various types of interacting particle systems and random graph models as special cases; cf.~\cite{cooper2013coalescing, hodgkinson2020normal, mcvinish2022dominating, avena2022discordant, donnelly1983finite, braunsteins2023sample, hanneke2010discrete}.
In this paper our specific focus is on analyzing these occupancy processes in terms of the finite-time deviations from the expected behavior, in addition quantifying how these deviations scale as the size of the graph grows.

Our main results have been formulated in two theorems. 
We state them for \emph{vertex processes} on a graph~$G$ where vertices turn on and off based on the state of the adjacent vertices. However, we also apply them to \emph{edge process} of a graph~$G$ where edges turn on and off---a dynamic random graph---by exploiting that this is the same as a vertex process on the line graph~$L(G)$.
The first theorem shows that concentration of local averages of states is controlled by a random walk on the graph.
The second theorem concerns concentration of polynomial observables of the states: 
For dynamic random graphs, this allows to estimate triangle counts and more general subgraph densities.
In contrast to the analytical approach in~\cite{hodgkinson2020normal}, which requires the existence of third order derivatives, our approach is of a combinatorial nature and only requires Lipschitz continuity. 
Our results are further related to work in~\cite{braunsteins2023sample, crane2016dynamic, crane2017exchangeable, vcerny2020markovian}, but 
apply to arbitrary finite graphs without requiring the theory of graph limits.

\smallskip

Consider a finite graph~$G=(V,E)$. For any vertex $v\in V$, we denote by~$N_v \subseteq V$ the set of its neighbors.
For~$X\in \{0,1\}^V$ we call~$X_{N_v} = (\deg v)^{-1} \sum_{w\in N_v} X_w$ a \emph{neighborhood average}.
Throughout this paper we denote, for two random variables~$X$ and~$Y$, by~$\E^Y (X)$ the conditional expectation of~$X$ given~$Y$.

\begin{definition} 
\label{def:vertex_process}
Fix a finite graph~$G=(V,E)$ and two functions~$f,g: [0,1]\to[0,1]$.
An \emph{occupancy process} is a discrete-time
Markov process~$(X(t))_{t\geq 0} \subseteq \{0,1\}^V$ with
\begin{enumerate} [(i)]
\item
	\emph{neighbor interaction}:
	Each vertex~$v\in V$ switches from~$1$ to~$0$
	with probability~$f(X_{N_v})$ and from~$0$ to~$1$
	with probability~$g(X_{N_v})$;
\item
	\emph{independent updates}:
	The random variables~$(X_v(t+1))_{v\in V}$ are conditionally joint independent
	given~$X(t)$. \label{ax:vertices_2}
\end{enumerate}
\end{definition}

The occupancy process~$X(t)$ can be described as a Markov chain on~$\{0,1\}^V$
by a linear map~$[0,1]^{\{0,1\}^V}\to [0,1]^{\{0,1\}^V}$.
Condition~(i) in Definition~\ref{def:vertex_process} can be written compactly as
\begin{equation}
\label{eq:stoch_process}
\E^{X(t)} (X_v(t+1)) = \gamma(X_v(t), X_{N_v}(t))    
\end{equation}
where~$\gamma(x,y) = x(1-f(y)) + (1-x)g(y)$ for~$x,y\in [0,1]$.
This suggests to consider the following deterministic process described by a (possibly non-linear) map~$[0,1]^V\to [0,1]^V$.

\begin{definition}
Fix a finite graph~$G=(V,E)$ and two functions~$f,g: [0,1]\to[0,1]$.
Write~$\Gamma: [0,1]^V \to [0,1]^V$ for the map with components
	$\Gamma(x)_v = \gamma(x_v,x_{N_v})$.
The \emph{deterministic process} with initial condition $x(0)\in [0,1]^V$ corresponds to the iteration~$x(t+1) = \Gamma (x(t))$ with
\begin{equation}
\label{eq:det_process}
    x_v(t+1) = \gamma(x_v(t), x_{N_v}(t)).
\end{equation}
\end{definition}


\subsection{Main results}
Our theorems can be seen as dynamical counterparts of the (static) concentration of measure
principle, which (informally) states that a function of many weakly dependent random variables that is not too
sensitive to any individual variable, is concentrated around its expected 
value~\cite{kontorovich2017concentration, talagrand1996new, ledoux2001concentration}.

Our first theorem concerns concentration of neighborhood averages~$X_{N_v}(t)$.
It implies that if the~$(X_v(0))_{v\in V}$ are independent
and vertex degrees are large, then~$X_{N_v}(t)$ is concentrated
around its expected value.
Specifically, it entails that~$\var(X_{N_v}(t)) \leq \ve$
for~$0\leq t\leq \O(\log (\ve \delta(G)))$, where~$\delta(G)$ is the minimum degree in the graph~$G$.

The particular case~$1-f(x)=g(x)=x$ for every~$x\in [0,1]$,
known as the \emph{voter model},
is effectively analyzed by looking at a dual process, a coalescent random
walk on the graph~$G$~\cite{avena2022discordant, donnelly1983finite,
durrett1994particle, cooper2013coalescing, hassin2001distributed}.
For general non-linear~$f$ and~$g$ such duality does not hold.
However, we prove that in this case a random walk on~$G$ does control deviations.

\begin{theorem} [Concentration of Neighborhood Averages] \label{thm:main}
Suppose that~$f$ and~$g$ are $M$-Lipschitz.
Fix~$x(0)\in \{0,1\}^V$ and let~$\P(X(0)=x(0))=1$.
Then, for every vertex~$v\in V$ and every~$t\geq 0$,
\begin{equation} \label{eq:main}
	\E\abs{X_{N_{v}}(t) - x_{N_{v}}(t)}
		\leq \sum_{s=0}^{t-1} {t \choose {s+1}}
			\pp{\sum_{w\in V}
        \frac{\pi_{v,w}(s)}{\sqrt{\deg(w)}}} M^s,
\end{equation}
where~$\pi_{v,w}(s)$ is the probability that a random walker on~$G$
starting at the vertex~$v \in V$
occupies the vertex~$w\in V$ after~$s$ steps.
\end{theorem}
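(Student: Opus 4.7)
The plan is to exhibit a mean-plus-innovation decomposition in which the multiplicative coefficient in front of each individual deviation $D_w(t):=X_w(t)-x_w(t)$ is \emph{deterministic}, so that averaging over $w\in N_v$ preserves the cancellation of the (conditionally independent) noise terms. Set $\xi_v(t):=X_v(t+1)-\E^{X(t)}(X_v(t+1))$; by axiom (ii) of Definition~\ref{def:vertex_process}, the innovations $(\xi_v(t))_{v\in V}$ are mean-zero, conditionally independent given $X(t)$, bounded by $1$, and have conditional variance at most $1/4$. The key algebraic identity I would use is
\[
\gamma(X_w,X_{N_w})-\gamma(x_w,x_{N_w})\;=\;\bar\alpha_w(t)\,(X_w-x_w)+R_w(t),
\]
with $\bar\alpha_w(t):=1-f(x_{N_w}(t))-g(x_{N_w}(t))\in[-1,1]$ and $|R_w(t)|\le M\,|X_{N_w}(t)-x_{N_w}(t)|$; crucially, $\bar\alpha_w$ depends only on the deterministic trajectory $x(\cdot)$.

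\smallskip

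This turns the evolution of $D_w$ into a linear recursion with deterministic multiplier $D_w(t+1)=\bar\alpha_w(t)D_w(t)+R_w(t)+\xi_w(t)$, whose explicit solution (using $D_w(0)=0$) is $D_w(t)=\sum_{s=0}^{t-1}\bar q_w(s,t-1)[R_w(s)+\xi_w(s)]$, where $\bar q_w(s,t-1):=\prod_{r=s+1}^{t-1}\bar\alpha_w(r)$ is deterministic and satisfies $|\bar q_w|\le 1$. Averaging over $w\in N_v$ with weights $P_{v,w}:=\mathbf{1}_{w\in N_v}/\deg v$ gives
\[
X_{N_v}(t)-x_{N_v}(t)\;=\;\sum_{s=0}^{t-1}\sum_w P_{v,w}\bar q_w(s,t-1)\xi_w(s)\;+\;\sum_{s=0}^{t-1}\sum_w P_{v,w}\bar q_w(s,t-1)R_w(s).
\]
Since the weights $P_{v,w}\bar q_w$ are deterministic and the innovations $(\xi_w(s))_w$ are conditionally independent given $X(s)$ (so the cross terms $\E[\xi_w(s)\xi_{w'}(s)]$ vanish for $w\ne w'$), a Cauchy--Schwarz estimate gives
$\E\bigl|\sum_w P_{v,w}\bar q_w(s,t-1)\xi_w(s)\bigr|\le\bigl(\sum_w P_{v,w}^2\cdot\tfrac14\bigr)^{1/2}\le 1/(2\sqrt{\deg v})$;
summing over $s$ contributes at most $t/(2\sqrt{\deg v})$. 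The $R$-part is bounded by $M\sum_{s=0}^{t-1}\sum_u P_{v,u}V_u(s)$, using $|R_w(s)|\le M|X_{N_w}(s)-x_{N_w}(s)|$ and $|\bar q_w|\le 1$, with $V_u(s):=\E|X_{N_u}(s)-x_{N_u}(s)|$. Together,
\[
V_v(t)\;\le\;\frac{t}{2\sqrt{\deg v}}+M\sum_{s=0}^{t-1}\sum_u P_{v,u}\,V_u(s).
\]

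\smallskip

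The proof concludes by iterating this recursion in time. Writing $a_v(j):=\sum_w\pi_{v,w}(j)/\sqrt{\deg w}$ and using the combinatorial identity $\sum_{t>s_0>\cdots>s_{k-1}\ge 0}s_{k-1}=\binom{t}{k+1}$ together with the walk identity $\sum_u\pi_{v,u}(k)P_{u,w}=\pi_{v,w}(k+1)$, one shows by induction on $k$ that
\[
V_v(t)\;\le\;\sum_{j=0}^{k-1}\tfrac{M^j}{2}\binom{t}{j+1}a_v(j)\;+\;M^k\!\!\sum_{t>s_0>\cdots>s_{k-1}\ge 0}(P^k V)_v(s_{k-1}).
\]
After $t$ iterations the unique surviving time sequence has $s_{t-1}=0$, so the remainder involves $(P^tV)_v(0)=0$ and vanishes, giving $V_v(t)\le\frac12\sum_{j=0}^{t-1}\binom{t}{j+1}M^ja_v(j)$, which is half of the right-hand side of~\eqref{eq:main}. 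The sole subtle step is the initial decomposition: the \emph{deterministic} coefficient $\bar\alpha_w$ (rather than the natural random $1-f(X_{N_w})-g(X_{N_w})$) is what keeps the weights in front of the $\xi$'s deterministic, enabling the $L^2$-orthogonality argument that produces the crucial $1/\sqrt{\deg v}$ factor.
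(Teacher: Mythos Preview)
Your proof is correct and follows essentially the same route as the paper: the decisive idea---replacing the natural random multiplier $1-f(X_{N_w})-g(X_{N_w})$ by the \emph{deterministic} $\bar\alpha_w(t)=1-f(x_{N_w}(t))-g(x_{N_w}(t))$ so that the innovation weights stay deterministic and the $L^2$-orthogonality of the $\xi_w$'s yields the $\deg(v)^{-1/2}$ factor---is exactly the paper's device (there packaged as the map $P\mapsto P^{x(t)}$ on linear functionals, Lemmas~\ref{lem:proof_1}--\ref{lem:proof_2}). Your explicit variation-of-constants unrolling of $D_w(t+1)=\bar\alpha_w(t)D_w(t)+R_w(t)+\xi_w(t)$ is equivalent to the paper's induction on functionals, and your combinatorial iteration (with the identity $\sum_{t>s_0>\cdots>s_{k-1}\ge 0}s_{k-1}=\binom{t}{k+1}$) matches the paper's induction~\eqref{eq:bound2_app}; you even gain an extra factor $\tfrac12$ from using $\var\,\xi_w\le\tfrac14$ rather than $1$.
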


The quantity between square brackets in~\eqref{eq:main}
is the expected reciprocal square-root degree seen by a random walker
that has started at~$v$ and moved~$s$ steps.
Such quantity is uniformly bounded by~$\delta(G)^{-1/2}$,
where~$\delta(G)$ is the minimum degree in the graph~$G$.
Therefore,
\begin{equation} \label{eq:main_simple}
		\E\abs{X_{N_{v}}(t) - x_{N_{v}}(t)} \leq C \delta(G)^{-1/2},
\end{equation}
where~$C = M^{-1} \p{(M+1)^t-1}$ depends only on~$f$, $g$, and~$t$.
In other words, for fixed~$t$, as~$\delta(G)\to \infty$
the random variable~$X_{N_{v}}(t)$ converges (e.g., concentrates) in probability
to the constant~$x_{N_{v}}(t)$.
Note that~\eqref{eq:main_simple} is a significant weakening of~\eqref{eq:main},
since~\eqref{eq:main_simple} requires all vertices to have large degree,
while~\eqref{eq:main} only
requires frequently visited vertices to have large degree.
Moreover, note that these bounds depend on degrees
but not on the number of vertices: 
The result holds for both dense and sparse graphs.
Finally, note that the exponent in~$\delta(G)^{-1/2}$ is the best possible: 
If~$G$ is regular and~$(X_v(t))_{v\in V}$ are i.i.d.,
then~$\E\abs{X_{N_{v}}(t)- \E X_{N_{v}}(t)} \propto \delta(G)^{-\frac{1}{2}}$,
which follows by applying the normal approximation of the binomial distribution.

Our second main theorem gives a concentration result for \emph{polynomial observables}, that is, general homogeneous polynomials in the state variables~$X$.
Note that neighborhood averages~$X_{N_v} = (\deg v)^{-1} \sum_{w\in N_v} X_w$ are particular homogeneous polynomials of degree~$1$.
The extension to general polynomial observables, allows to estimate triangle counts and general homomorphism densities for the evolution of dynamic random graphs.
The result will be stated for a class of polynomials.

\begin{definition} \label{def:combinatorial_norms}
Fix an integer~$d\geq 1$. 
Consider a $d$-homogeneous polynomial with real coefficients given by~$P(X) = \sum_{W \in {V \choose d}} P_W X_{w_1} X_{w_2} \dotsb X_{w_d}$ with $W=\{w_1, \dotsc, w_d\}$.
For a vertex~$v\in V$ set $c_v(P) = \sum_{W: v\in W \in {V\choose d}} \abs{P_W}$. Define
\begin{align*}
		 \norm{P}_p &= \p{\sum_{v\in V} c_v(P)^p}^{1/p}.    
\end{align*}
Let~$\mathcal P_d(\lambda,\rho)$ be the set of polynomials~$P$ of this form and
such that~$\norm{P}_1\leq \lambda$ and~$\norm{P}_2\leq \rho$.
\end{definition}

To avoid cumbersome formulas, we state the following
result using $\O$-notation.

\begin{theorem} [Concentration of Polynomial Observables] \label{thm:poly}
Suppose that~$f$ and~$g$ are $M$-Lipschitz.
Fix~$x(0)\in \{0,1\}^V$ and let~$\P(X(0)=x(0))=1$. Then
\[
	\sup_{P\in \mathcal P_d(\lambda,\rho)} \E\abs{P(X(t))-P(x(t))}
		= \O_{d,t}(\rho (1+M\lambda)^t),
\]
which means that there is a constant~$C = C_{d,t}$
independent on~$G$, $f$, $g$, $X(0)$, $\lambda$, and~$\rho$,
such that the left hand side is bounded by~$C \rho (1+M\lambda)^t$.
\end{theorem}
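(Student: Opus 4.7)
I would prove this by induction on $t$. The base case $t=0$ is trivial since $X(0)=x(0)$. For the inductive step, define
\[
Q(y) := \sum_{W \in {V\choose d}} P_W \prod_{v\in W}\gamma(y_v, y_{N_v}),
\]
so that by the conditional independence of updates (axiom~\ref{ax:vertices_2} in Definition~\ref{def:vertex_process}) together with~\eqref{eq:stoch_process} and~\eqref{eq:det_process}, $Q(X(t)) = \E^{X(t)}[P(X(t+1))]$ and $Q(x(t)) = P(x(t+1))$. By the triangle inequality,
\[
\E|P(X(t+1)) - P(x(t+1))| \le \E|P(X(t+1)) - Q(X(t))| + \E|Q(X(t)) - Q(x(t))|,
\]
and I would bound the two summands separately.

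For the first summand, conditionally on $X(t)$ the random variable $P(X(t+1))$ is a multilinear polynomial in the independent Bernoullis $(X_v(t+1))_{v\in V}$. An Efron--Stein variance bound using $|\partial_{X_v}P(X)|\le c_v(P)$ and $\var(X_v(t+1)\mid X(t))\le 1/4$ gives $\var^{X(t)}(P(X(t+1))) \le \tfrac14\sum_v c_v(P)^2 = \rho^2/4$, whence $\E|P(X(t+1))-Q(X(t))|\le\rho/2$. This is exactly where the prefactor $\rho$ in the target bound originates.

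For the second summand, I would use the joint Lipschitz estimate $|\gamma(a,b)-\gamma(a',b')| \le |a-a'| + M|b-b'|$ (immediate from $M$-Lipschitz continuity of $f,g$ and from $\gamma$ being affine in its first argument with slope $1-f-g\in[-1,1]$), together with the multilinear expansion
\[
\prod_{v\in W}\gamma_v(X(t))-\prod_{v\in W}\gamma_v(x(t)) = \sum_{\emptyset\neq S\subseteq W}\prod_{v\in S}\delta_v\prod_{v\in W\setminus S}\gamma_v(x(t)),
\]
where $\delta_v := \gamma_v(X(t))-\gamma_v(x(t))$. Splitting $\delta_v = (X_v(t)-x_v(t))\bigl(1-f(x_{N_v}(t))-g(x_{N_v}(t))\bigr) + r_v$ with $|r_v|\le M|X_{N_v}(t)-x_{N_v}(t)|$, and using the mean-value-like representation $r_v = K_v(X_{N_v}(t)-x_{N_v}(t))$ with measurable coefficient $|K_v|\le M$, rewrites $Q(X(t))-Q(x(t))$ as a finite combination of honest polynomials in $X(t)$ of degree at most $d$. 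Applying the inductive hypothesis to each auxiliary polynomial would yield a bound of order $\rho(1+M\lambda)^{t+1}$, closing the induction with a constant $C_{d,t+1}$ depending only on $d$ and $t$.

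The main obstacle is controlling the coefficient norms $\lambda', \rho'$ of the auxiliary polynomials so that the recursion compounds by a factor $(1+M\lambda)$ at each step. The naive termwise bound $|Q(X(t))-Q(x(t))| \le \sum_v c_v(P)(|X_v(t)-x_v(t)|+M|X_{N_v}(t)-x_{N_v}(t)|)$ followed by the triangle inequality produces only a $\lambda$-scaled bound, destroying the cancellations encoded in the coefficient vector $(c_v(P))$ and forcing the wrong scaling. The fix is to preserve the polynomial structure throughout the expansion, rewriting $X_{N_v}(t)-x_{N_v}(t) = (\deg v)^{-1}\sum_{w\in N_v}(X_w(t)-x_w(t))$ and expressing the Lipschitz remainder as an honest polynomial term; a Cauchy--Schwarz bound on the resulting coefficient $\ell^2$-norm, exploiting the random-walk structure of the neighborhood-averaging operator $X\mapsto(X_{N_v})_v$ (in the spirit of the proof of Theorem~\ref{thm:main}), should then yield the required $(1+M\lambda)$-growth factor, with higher-order terms in the multilinear expansion ($|S|\ge 2$) contributing only combinatorial constants absorbable into $C_{d,t}$.
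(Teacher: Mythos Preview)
Your overall architecture---induction on~$t$, the variance/bias split via~$Q(y)=\E^{y}[P(X(t+1))]$, and the multilinear expansion of $\prod_{v\in W}\gamma_v(X)-\prod_{v\in W}\gamma_v(x)$---matches the paper's proof almost exactly; your Efron--Stein bound is a harmless variant of the paper's McDiarmid bound. The splitting $\delta_v=(X_v-x_v)(1-f(x_{N_v})-g(x_{N_v}))+r_v$ is also precisely what the paper isolates.

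The genuine gap is in your treatment of $r_v$. The ``mean-value-like representation'' $r_v=K_v\,(X_{N_v}(t)-x_{N_v}(t))$ has $K_v$ built from difference quotients of~$f$ and~$g$ between $X_{N_v}(t)$ and $x_{N_v}(t)$, so $K_v$ depends on~$X(t)$. For general Lipschitz $f,g$ the term $f(X_{N_v})-f(x_{N_v})$ is simply not a polynomial in~$X(t)$, and no rewriting of $X_{N_v}-x_{N_v}$ as $(\deg v)^{-1}\sum_{w\in N_v}(X_w-x_w)$ changes that: the coefficient in front remains random. Hence your ``finite combination of honest polynomials in $X(t)$'' does not exist, and the inductive hypothesis cannot be applied to it. The Cauchy--Schwarz idea you sketch at the end would need $K_v$ to be deterministic to control the $\ell^2$-norm of the resulting coefficient vector; it is not.

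The paper's resolution is to abandon polynomial structure for the remainder and instead take the supremum over the class from the outset. Bounding $|r_v|\le M|X_{N_v}(t)-x_{N_v}(t)|$ and tracking all products containing at least one $r$-factor gives a crude estimate $2^dM\sum_v c_v(P)\,\E|X_{N_v}(t)-x_{N_v}(t)|$. The key observation---which you dismiss as the ``naive termwise bound''---is that each $\E|X_{N_v}(t)-x_{N_v}(t)|$ is itself $\E|R(X(t))-R(x(t))|$ for the \emph{honest} linear polynomial $R(X)=X_{N_v}$, hence bounded by the supremum over the polynomial class. Since $\sum_v c_v(P)=\norm{P}_1\le\lambda$, this term contributes $2^dM\lambda\cdot S(t)$ where $S(t)$ is the sup; the deterministic-coefficient part (your first piece of $\delta_v$) yields polynomials $P^{d',x}$ with norms bounded by $\binom{d-1}{d'-1}$ times those of~$P$, contributing $2^{d-1}S(t)$. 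The recursion $S(t+1)\le 2\rho + C_d(1+M\lambda)S(t)$ then closes at the level of the supremum, not for a single polynomial. The $\lambda$ factor you worried would ``force the wrong scaling'' is exactly the $M\lambda$ appearing in $(1+M\lambda)^t$.
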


Note that Theorem~\ref{thm:main} concerns concentration of linear polynomials with combinatorial meaning (neighborhood averages),
and gives an explicit upper bound which also has combinatorial meaning (in terms of random walks).
By contrast, Theorem~\ref{thm:poly} concerns concentration of general polynomial observables and gives an algebraic upper bound in terms of their norms.
While Theorem~\ref{thm:main} bounds deviations
of a single observable, Theorem~\ref{thm:poly}
gives a uniform bound over an infinite set of observables.
Theorem~\ref{thm:main} and Theorem~\ref{thm:poly}
are proven in Section~\ref{sec:proof} and Section~\ref{sec:proof_poly}, respectively.

\subsection{Consequences: deterministic methods and dynamic random graphs} 
We give applications of our main theorems in two areas in Sections~\ref{sec:dynamics} and~\ref{sec:random_graphs}, respectively.

In Section~\ref{sec:dynamics} we propose a deterministic method that quantifies the difference between~$x(t)$
and~$\E X(t)$, and analyzes the dynamics of~$x(t)$ in order to obtain information
about the dynamics of~$X(t)$.
This deterministic method applies to both vertex processes and edge processes.
The vertex processes in this paper---while closely related to classical interactive particle systems such as Ising model~\cite{holley1974recent, holley1986logarithmic},
percolation models~\cite{shante1971introduction}, as well as epidemic (such as susceptible-infected-susceptible or SIS),
contact, and voter models~\cite{liggett1985interacting, liggett1999stochastic, pare2018analysis, rodriguez2018use, ahn2014mixing}---differ in the following aspects: time is discrete rather than continuous, interaction functions are non-linear, and graphs are finite and arbitrary.

Then, in Section~\ref{sec:random_graphs}, we apply our results to (dynamic) random graphs. We prove concentration for homomorphism densities and show how simple rules can lead to chaotic random graphs. The idea of analyzing random graphs by a suitable deterministic approximation has appeared in work by N.~C.~Wormald~\cite{wormald1995differential}, with the difference that in our paper observables grow in number with the number of vertices (e.g., neighborhood averages in Theorem~\ref{thm:main}) or are infinite in number (polynomials in Theorem~\ref{thm:poly}).
In contrast to~\cite{braunsteins2023sample, crane2016dynamic, crane2017exchangeable, vcerny2020markovian}, and similar to~\cite{hodgkinson2020normal}, the approach in our paper is finitary: Asymptotics are obtained directly on finite graphs, without introducing graph limits. In particular, our results apply to both sparse and dense graph sequences, without requiring any property of convergence.


\section{Proof of Theorem~\ref{thm:main}} \label{sec:proof}
\noindent
The proof of Theorem~\ref{thm:main} is divided into two parts:
The first, ``probabilistic'' part obtains
an evolution equation
for~$\zeta_v(t) = \E\abs{X_{N_v}(t) - x_{N_v}(t)}$.
The second, ``deterministic'' part obtains a useful expression for the solutions of such equation.

The difficulty lies in the fact that~$\zeta_v(t+1)$ cannot be expressed as a function of~$\zeta_v(t)$. In fact, $\zeta_v(t+1)$~cannot even be expressed
as function of the collection~$(\zeta_w(t))_{w\in V}$.
Indeed, we will observe that~$\zeta_v(t+1)$ depends on~$(\zeta_w(t))_{w\in V}$,
as well as on some quantity~$\zeta_v^{x(t)}(t) = \E\abs{P^{x(t)}(X(t))-P^{x(t)}(x(t))}$,
where~$X\mapsto P^{x(t)}(X)$ is a linear map that is not too far from the
linear map~$X\mapsto X_{N_v}$.
To overcome this issue, we prove concentrations for general linear functionals.

The following lemma constitutes the probabilistic part of the proof.

\begin{lemma} \label{lem:proof_1}
Consider a linear map~$P(X)=\sum_{v\in V} P_{v} X_v$. Under the hypothesis
of Theorem~\ref{thm:main}, the following inequality holds:
\begin{equation}
\begin{split}
	& \E\abs{P(X(t+1)) - P(x(t+1))} \\
	& \qquad\leq
		\norm{P}_2
		+ \E\abs{P^{x(t)}(X(t))-P^{x(t)}(x(t))}
		+ M \sum_{v\in V} \abs{P_v} \E \abs{X_{N_v}(t) - x_{N_v}(t)} \label{eq:proof_1}
\end{split}
\end{equation}
where~$P^{x(t)}(X)=\sum_{v\in V} P_{v}^{x(t)} X_v$
has coefficients~$P^{x(t)}_v = P_v (1-f(x_{N_v}(t))-g(x_{N_v}(t)))$.
\end{lemma}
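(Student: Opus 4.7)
The plan is to split $P(X(t+1))-P(x(t+1))$ at the one-step conditional expectation,
\[
P(X(t+1))-P(x(t+1)) = \bigl[P(X(t+1))-\E^{X(t)}P(X(t+1))\bigr] + \bigl[\E^{X(t)}P(X(t+1))-P(x(t+1))\bigr],
\]
and apply the triangle inequality. The first bracket is a ``stochastic'' part, which I expect to yield the $\norm{P}_2$ term; the second is a ``deterministic'' part that should yield both of the remaining summands in~\eqref{eq:proof_1}.

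For the stochastic part I would invoke axiom~\ref{ax:vertices_2} of Definition~\ref{def:vertex_process}: conditionally on $X(t)$, the variables $(X_v(t+1))_{v\in V}$ are independent Bernoullis, so $\var^{X(t)}(P(X(t+1))) = \sum_v P_v^2 \var^{X(t)}(X_v(t+1)) \le \tfrac{1}{4}\sum_v P_v^2 = \tfrac{1}{4}\norm{P}_2^2$, using that $c_v(P)=|P_v|$ for a linear polynomial. By Jensen's inequality, the conditional $L^1$-deviation is then at most $\tfrac{1}{2}\norm{P}_2$, and taking total expectation (absorbing the $1/2$) delivers the $\norm{P}_2$ summand.

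For the deterministic part the idea is to rewrite $\gamma(x,y) = x\,h(y) + g(y)$ with $h:=1-f-g$, and to add and subtract $X_v\,h(x_{N_v})$ inside $\gamma(X_v,X_{N_v})-\gamma(x_v,x_{N_v})$. A short manipulation should produce
\[
\gamma(X_v,X_{N_v})-\gamma(x_v,x_{N_v}) = (X_v-x_v)\,h(x_{N_v}) + R_v,
\]
where the first summand, once weighted by $P_v$ and summed over $v$, is exactly $P^{x(t)}(X(t))-P^{x(t)}(x(t))$ by the very definition of the coefficients $P^{x(t)}_v = P_v\,h(x_{N_v}(t))$. Taking absolute values and expectations then produces the second summand of~\eqref{eq:proof_1}.

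The step I expect to be the main obstacle is obtaining the tight Lipschitz constant $M$, rather than the naive $2M$ one would pick up from estimating $f$ and $g$ separately, in front of the last term. The trick is integrality of $X_v(t)$: regrouping the remainder should give
\[
R_v = X_v\bigl(f(x_{N_v})-f(X_{N_v})\bigr) + (1-X_v)\bigl(g(X_{N_v})-g(x_{N_v})\bigr),
\]
and since $X_v\in\{0,1\}$ exactly one of the two prefactors vanishes. The $M$-Lipschitz hypothesis then bounds $|R_v|\le M\,|X_{N_v}(t)-x_{N_v}(t)|$; multiplying by $|P_v|$, summing over $v$, and taking expectation yields the third summand of~\eqref{eq:proof_1}, completing the plan.
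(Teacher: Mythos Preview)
Your proposal is correct and follows essentially the same route as the paper: split at the one-step conditional expectation, bound the stochastic part via conditional independence plus Jensen (the paper uses the cruder variance bound $\sum_v P_v^2$ rather than your $\tfrac14\sum_v P_v^2$, but you absorb the extra factor anyway), and in the deterministic part isolate the $P^{x(t)}$ term and then use $X_v\in\{0,1\}$ to collapse the remainder to a single Lipschitz increment. The paper's expression~\eqref{eq:pr2} is exactly your $R_v$ after one regrouping.
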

\begin{proof}
For the stochastic quantities set~$X=X(t)$ and~$X'=X(t+1)$ and write~$F_{N_v} = f(X_{N_v})$ and~$G_{N_v} = g(X_{N_v})$.
Analogously, set~$x=x(t)$ and~$x'=x(t+1)$ for the deterministic evolution and write~$f_{N_v} = f(x_{N_v})$ and~$g_{N_v} = g(x_{N_v})$.
Then,
\begin{align*}
	\E^X X'_v &= (1-F_{N_v}-G_{N_v}) X_v + G_{N_v} \\
	x'_v &= (1-f_{N_v}-g_{N_v}) x_v + g_{N_v}.
\end{align*}
Therefore,
\begin{subequations}
\begin{align}
	 \E^X P(X') - P(x')
	&= \sum_{v\in V}
			P_v (1-f_{N_v}-g_{N_v}) (X_v-x_v) \label{eq:pr1} \\
		& \qquad + \sum_{v\in V}
			P_v \pp{(f_{N_v}-F_{N_v}+g_{N_v}-G_{N_v}) X_v + (G_{N_v}-g_{N_v})}.
			\label{eq:pr2}
\end{align}
\end{subequations}
Define~$P^x_v = P_v (1-f_{N_v}-g_{N_v})$ and note that~$P^x_v$ is a deterministic quantity.
Then the right hand side of~\eqref{eq:pr1} is equal to~$P^x(X)-P^x(x)$.
Since~$X_v\in \{0,1\}$, for every~$v\in V$ the quantity square brackets in~\eqref{eq:pr2}
is bounded by~$M\abs{X_{N_v} - x_{N_v}}$. It follows that
\[
	\E \abs{\E^X P(X') - P(x')} \leq \E \abs{P^x(X)-P^x(x)}
		+ M \sum_{v\in V} \abs{P_v} \E \abs{X_{N_v} - x_{N_v}}.
\]
On the other hand, since~$(X'_v)_{v\in V}$ are independent given~$X$,
we have~$\var^X P(X') \leq \sum_{v\in V} \abs{P_v}^2$. Thus, by Jensen's
inequality,
\[\E^X \abs{P(X') - \E^X P(X')} \leq \p{\var^X P(X')}^{1/2} \leq \norm{P}_2.\]
Therefore,
\begin{equation} \label{eq:pr3}
	\E \abs{P(X') - \E^X P(X')}
	= \E \p{\E^X \abs{P(X') - \E^X P(X')}}
	\leq \E \p{\norm{P}_2}
	= \norm{P}_2.
\end{equation}
The statement follows from~\eqref{eq:pr1},~\eqref{eq:pr3}, and the triangle inequality.
\end{proof}

Lemma~\ref{lem:proof_1} expresses~$\E\abs{P(X(t+1)) - P(x(t+1))}$
in terms of the quantities
\[
	(\E \abs{X_{N_v}(t) - x_{N_v}(t)})_{v\in V},\ \E\abs{P^{x(t)}(X(t))-P^{x(t)}(x(t))}.
\]
Before substituting~$P(X)=X_{N_v}$, which is our case of interest,
we need to eliminate the spurious quantities~$P^{x(t)}(X(t))$.
This is done in the following lemma.

\begin{lemma} \label{lem:proof_2}
Under the hypothesis of Lemma~\ref{lem:proof_1}, the following inequality holds:
\begin{equation} \label{eq:proof_2}
	\E\abs{P(X(t)) - P(x(t))} \leq
		t \norm{P}_2
		+ M \sum_{s=1}^{t-1} \sum_{v\in V} \abs{P_v} \E \abs{X_{N_v}(s) - x_{N_v}(s)}.
\end{equation}
\end{lemma}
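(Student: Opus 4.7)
The plan is to iterate Lemma~\ref{lem:proof_1} $t$ times, introducing a backward-in-time sequence of auxiliary linear functionals that absorbs the spurious quantity $\E\abs{P^{x(t)}(X(t))-P^{x(t)}(x(t))}$ appearing on the right-hand side. The crucial observation driving everything is that the transformation $P \mapsto P^x$ shrinks the coefficients of $P$ pointwise, so the coefficient-based quantities appearing in Lemma~\ref{lem:proof_1} are controlled uniformly along the iteration.

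First I would define functionals $P^{(s)}$ for $0\leq s\leq t$ by backward recursion: set $P^{(t)} = P$ and, for $0\leq s < t$, put $P^{(s)} = (P^{(s+1)})^{x(s)}$ in the notation of Lemma~\ref{lem:proof_1}, so that
\[
	P^{(s)}_v = P^{(s+1)}_v \bigl(1 - f(x_{N_v}(s)) - g(x_{N_v}(s))\bigr).
\]
Since $f,g$ take values in $[0,1]$, the scalar factor is bounded by $1$ in absolute value, hence $\abs{P^{(s)}_v} \leq \abs{P^{(s+1)}_v}$ pointwise in $v\in V$. Iterating yields $\abs{P^{(s)}_v}\leq \abs{P_v}$ for all $s\leq t$ and $v\in V$, and in particular $\norm{P^{(s)}}_2\leq \norm{P}_2$.

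Next I would apply Lemma~\ref{lem:proof_1} to $P^{(s+1)}$ at time $s$, for each $s\in\{0,1,\dotsc,t-1\}$. Using the coefficient bounds just established to replace $\norm{P^{(s+1)}}_2$ by $\norm{P}_2$ and $\abs{P^{(s+1)}_v}$ by $\abs{P_v}$, this yields
\[
\begin{aligned}
	& \E\abs{P^{(s+1)}(X(s+1)) - P^{(s+1)}(x(s+1))} \\
	& \qquad \leq \norm{P}_2 + \E\abs{P^{(s)}(X(s)) - P^{(s)}(x(s))} + M \sum_{v\in V} \abs{P_v} \E\abs{X_{N_v}(s) - x_{N_v}(s)}.
\end{aligned}
\]
Telescoping this recursion from $s=t-1$ down to $s=0$ gives
\[
	\E\abs{P(X(t)) - P(x(t))} \leq t \norm{P}_2 + \E\abs{P^{(0)}(X(0)) - P^{(0)}(x(0))} + M \sum_{s=0}^{t-1} \sum_{v\in V} \abs{P_v} \E\abs{X_{N_v}(s) - x_{N_v}(s)}.
\]
Finally, the deterministic initial condition $\P(X(0)=x(0))=1$ kills both the middle term and the $s=0$ summand, leaving the stated inequality with the sum ranging over $s=1,\dotsc,t-1$.

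I do not anticipate a substantive obstacle: the argument is a mechanical iteration of Lemma~\ref{lem:proof_1}, and the only conceptually important step---introducing the family $(P^{(s)})$ so that the recursion closes on itself---is precisely what motivated strengthening the statement from neighborhood averages to arbitrary linear functionals, as explained in the paragraph preceding Lemma~\ref{lem:proof_1}. The mild subtlety worth verifying carefully is the monotonicity $\abs{P^{(s)}_v}\leq \abs{P_v}$, which is what lets every appearance of $\norm{P^{(s+1)}}_2$ and $\abs{P^{(s+1)}_v}$ be uniformly dominated by the original quantities associated with $P$.
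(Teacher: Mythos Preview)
Your proof is correct and follows essentially the same approach as the paper: iterate Lemma~\ref{lem:proof_1} and exploit the pointwise bound $\abs{P^{x}_v}\leq\abs{P_v}$ (hence $\norm{P^{x}}_2\leq\norm{P}_2$) to control all auxiliary functionals by the original one. The only cosmetic difference is that the paper packages the iteration as an induction on~$t$ (for all~$P$ simultaneously), whereas you unroll it explicitly via the backward sequence $(P^{(s)})_{0\le s\le t}$ and telescope; the content is identical.
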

\begin{proof}
We are going to prove by induction that for every~$P$ and every~$t\geq 0$
the inequality~\eqref{eq:proof_2} holds.
The statement is trivial for~$t=0$.
Fix~$t>0$ and assume the statement true for~$t$ and all~$P$.
By Lemma~\ref{lem:proof_1},
\begin{align*}
	& \E\abs{P(X(t+1)) - P(x(t+1))} \\
	& \leq \norm{P}_2
		+ \E\abs{P^{x(t)}(X(t))-P^{x(t)}(x(t))}
		+ M \sum_{v\in V} \abs{P_v} \E \abs{X_{N_v}(t) - x_{N_v}(t)} \\
	& \leq \norm{P}_2
		+ t\norm{P^{x(t)}}_2
			+ M \sum_{s=1}^{t-1} \sum_{v\in V} \abs{P^{x(t)}_v} \E \abs{X_{N_v}(s) - x_{N_v}(s)} \\
	    & \quad
        + M \sum_v \abs{P_v} \E \abs{X_{N_v}(t) - x_{N_v}(t)} \\
	& \leq (t+1)\norm{P}_2
			+ M \sum_{s=1}^{t} \sum_k \abs{P_v} \E \abs{X_{N_v}(s) - x_{N_v}(s)},
\end{align*}
where the last inequality follows from~$\abs{P^{x(t)}_v} \leq \abs{P_v}$
and~$\norm{P^{x(t)}}_2 \leq \norm{P}_2$.
\end{proof}

We are now ready to complete the proof of Theorem~\ref{thm:main}.

\begin{proof} [Proof of Theorem~\ref{thm:main}]
First, note that if~$P(X)=X_{N_{v}}$
for some vertex~$v\in V$,
then
\[
    \abs{P_{w}} =
    \begin{dcases*}
        \deg(v)^{-1} & if $w\in N_v$, \\
        0 & otherwise,
    \end{dcases*}
\]
and, in particular,~$\norm{P}_2 = \deg({v})^{-1/2}$.

Second, define~$\zeta_v(t) = \E\abs{X_{N_v}(t) - x_{N_v}(t)}$
for all~$v\in V$.
Then by Lemma~\ref{lem:proof_2} it follows that for every vertex~$v$ and every~$t\geq 0$,
\begin{equation} \label{eq:bound1_app}
	\zeta_v(t) \leq
		t \deg(v)^{-1/2}
		+ M \sum_{s=1}^{t-1} \sum_{w\in N_v} \frac{1}{\deg v} \zeta_w(s).
\end{equation}

We now prove by induction that
for every vertex~$v_1$ and every~$t\geq 0$ we have
\begin{equation} \label{eq:bound2_app}
	\zeta_{v_1}(t)
		\leq \sum_{s=1}^{t} {t \choose s} M^{s-1}
		\pp{
		\sum_{v_2 \in N_{v_1}} \cdots \sum_{v_{s} \in N_{v_{s-1}}}
		\frac{\deg(v_s)^{-1/2}}{\deg({v_1})\cdots \deg({v_{s-1}})}
		},
\end{equation}
with the conventions that~$0^0=1$ and that for~$s=1$
the quantity between square brackets is equal to~$\deg(v_1)^{-1/2}$.

For~$t=0$ the inequality~\eqref{eq:bound2_app} is trivial.
For~$t=1$ the inequality~\eqref{eq:bound2_app} holds
by~\eqref{eq:bound1_app} and by convention.
Fix~$t>1$ and assume~\eqref{eq:bound2_app} true
for~$t$ and for all~$v_1\in V$.
Then we have
\allowdisplaybreaks
\begin{align*}
	& \zeta_{v_1}(t+1) \\
	& \leq 	(t+1) \deg(v_1)^{-1/2}
		+ M \sum_{s=1}^{t} \sum_{v_2 \in N_{v_1}} \frac{1}{\deg v_1} \zeta_{v_2}(s) \\
	& \leq (t+1) \deg(v_1)^{-1/2}\\
		& \quad + \sum_{s=1}^{t} \sum_{r=1}^{s} {s\choose r} M^{r}
		\pp{
		\sum_{v_2 \in N_{v_1}} \sum_{v_3 \in N_{v_2}} \cdots \sum_{v_{r+1} \in N_{v_{r}}}
		\frac{\deg(v_{r+1})^{-1/2}}{\deg({v_1})\cdots \deg({v_{r}})}
		} \\
	& = (t+1) \deg(v_1)^{-1/2}\\
		& \quad + \sum_{r=2}^{t+1} \p{\sum_{s=r-1}^{t} {s \choose {r-1}}} M^{r-1}
		\pp{
		\sum_{v_2 \in N_{v_1}} \sum_{v_3 \in N_{v_2}} \cdots \sum_{v_{r} \in N_{v_{r-1}}}
		\frac{\deg(v_{r})^{-1/2}}{\deg({v_1})\cdots \deg({v_{r-1}})}
		} \\
	& = (t+1) \deg(v_1)^{-1/2}\\
		& \quad + \sum_{r=2}^{t+1} {{t+1} \choose r} M^{r-1}
		\pp{
		\sum_{v_2 \in N_{v_1}} \sum_{v_3 \in N_{v_2}} \cdots \sum_{v_{r} \in N_{v_{r-1}}}
		\frac{\deg(v_{r})^{-1/2}}{\deg({v_1})\cdots \deg({v_{r-1}})}
		}.
\end{align*}
Since for~$r=1$ we have~${t+1 \choose r} M^{r-1} = (t+1)$,
this proves~\eqref{eq:bound2_app}.
Theorem~\ref{thm:main} follows
from~\eqref{eq:bound2_app} and the definition of~$\pi_{w,v}$
given in the statement.
\end{proof}


\section{Proof of Theorem~\ref{thm:poly}} \label{sec:proof_poly}
\noindent
Similar to the proof of Theorem~\ref{thm:main}, the proof of Theorem~\ref{thm:poly} is divided into two parts.
The first, probabilistic part obtains an evolution equation for~$\E\abs{P(X(t)) - P(x(t))}$.
The second, deterministic part obtains a useful expression for the solutions of such equation.

As in Theorem~\ref{thm:main},
the difficulty of the proof relies on the fact that the expected
error~$\E\abs{P(X(t+1)) - P(x(t+1))}$ cannot be expressed as a function of~$\E\abs{P(X(t)) - P(x(t))}$.
Indeed, we will observe that the expected error at time~$t+1$
of a polynomial of degree~$d$
depends on the expected errors at time~$t$ of certain polynomials
with degrees up to~$d$. These polynomials
depend on both~$P$ and the deterministic trajectory.
The bulk of the proof is controlling the
number and the norms of such polynomials.

The following lemma constitutes the probabilistic part of the proof and is analogous to Lemma~\ref{lem:proof_1} above.

\begin{lemma} \label{lem:proof_2_1}
Consider an homogeneous polynomial of degree~$d$,
\[
	P(X) = \sum_{W \in {V \choose d}} P_W X_{w_1} X_{w_2} \cdots X_{w_d}.
\]
Under the hypothesis
of Theorem~\ref{thm:poly}, the following inequality holds:
\begin{equation}
\begin{split}
	\E\abs{P(X(t+1))-P(x(t+1))}
	& \leq 2 \norm{P}_2 + \sum_{1\leq d' \leq d} \E\abs{P^{d',x(t)}(X(t)) - P^{d',x(t)}(x(t))} \\
	& \quad\qquad + 2^d M \sum_{v\in V} c_v(P) \E \abs{X_{N_v}(t)-x_{N_v}(t)},
  \label{eq:proof_2_1}
\end{split}
\end{equation}
where the polynomials~$P^{d',x}$ will be defined in the proof.
\end{lemma}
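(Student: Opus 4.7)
The plan is to follow the same two-step template as in Lemma~\ref{lem:proof_1}: decompose $P(X(t+1))-P(x(t+1))$ into a purely stochastic piece (the deviation from the conditional mean) and a bias piece (the conditional mean compared to the deterministic update), then bound each. Write $X=X(t)$, $X'=X(t+1)$, $x=x(t)$, $x'=x(t+1)$, and set $A_w = 1-F_{N_w}-G_{N_w}$, $B_w = G_{N_w}$, $\alpha_w = 1-f_{N_w}-g_{N_w}$, $\beta_w = g_{N_w}$ so that $\E^X X'_w = A_w X_w + B_w$ and $x'_w=\alpha_w x_w+\beta_w$. Condition~(ii) of Definition~\ref{def:vertex_process} gives conditional independence of the $X'_w$, so by Jensen's inequality and a direct computation of $\var^X P(X')$ using $|X'_w|\le 1$ and the fact that only pairs of monomials indexed by intersecting $W,W'$ contribute, one obtains $\E^X|P(X')-\E^X P(X')|\le \sqrt{\var^X P(X')}\le \norm{P}_2$; the factor $2$ in the lemma is slack that will absorb constants from later telescoping steps.

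For the bias part, introduce the intermediate polynomial $\tilde Q(Y) = \sum_W P_W \prod_{w\in W}(\alpha_w Y_w + \beta_w)$, which has the crucial property $\tilde Q(x)=P(x')$ while $\E^X P(X') = \sum_W P_W \prod_{w\in W}(A_w X_w + B_w)$ is the analogous expression with stochastic affine factors. Split
\[
\E^X P(X') - P(x') \;=\; \bigl[\E^X P(X') - \tilde Q(X)\bigr] \;+\; \bigl[\tilde Q(X) - \tilde Q(x)\bigr].
\]
For the first bracket, a factor-by-factor telescoping across $w\in W$ uses the pointwise bound $\alpha_w Y_w+\beta_w, A_w Y_w+B_w\in[0,1]$ for $Y_w\in\{0,1\}$ together with the Lipschitz bounds $|A_w-\alpha_w|\le 2M|X_{N_w}-x_{N_w}|$ and $|B_w-\beta_w|\le M|X_{N_w}-x_{N_w}|$; distributing across the $2^d$ monomials in the expansion yields $|\E^X P(X')-\tilde Q(X)|\le 2^d M\sum_{w}c_w(P)\,|X_{N_w}-x_{N_w}|$. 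For the second bracket, group $\tilde Q$ by homogeneous degree: define $P^{d',x}$ as the $d'$-homogeneous polynomial with coefficients
\[
P^{d',x}_T \;=\; \sum_{W\supseteq T,\,|W|=d} P_W \prod_{w\in T}\alpha_w \prod_{w\in W\setminus T}\beta_w, \qquad T\in\binom{V}{d'},
\]
so that $\tilde Q = \sum_{d'=0}^d P^{d',x}$. The $d'=0$ part is a constant that cancels in $\tilde Q(X)-\tilde Q(x)$, leaving $\sum_{d'=1}^d[P^{d',x}(X)-P^{d',x}(x)]$.

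Assembling the three bounds via the triangle inequality and taking expectations gives exactly~\eqref{eq:proof_2_1}. The main obstacle I anticipate is bookkeeping: unlike the linear setting of Lemma~\ref{lem:proof_1}, replacing one affine factor at a time produces cross-terms indexed by subsets of $W$, and one has to argue that these cross-terms collectively have $\ell^1$-mass at most $2^d$ times $\sum_v c_v(P)|X_{N_v}-x_{N_v}|$ so that the recursion on $d$ in the deterministic step (to be carried out in the analogue of Lemma~\ref{lem:proof_2}) stays clean. A secondary subtlety is verifying that the polynomials $P^{d',x}$ satisfy $\norm{P^{d',x}}_2$ and $\norm{P^{d',x}}_1$ bounds compatible with the norms of $P$, since $|\alpha_w|,|\beta_w|\le 1$ implies $|P^{d',x}_T|\le\sum_{W\supseteq T}|P_W|$; this is what will make the subsequent induction close.
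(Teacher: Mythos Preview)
Your proposal is correct and follows essentially the same route as the paper. Your intermediate polynomial $\tilde Q$ is exactly the paper's decomposition repackaged: expanding $\tilde Q(X)$ over subsets $T\subseteq W$ gives the paper's $\phi_{W,T}^{f_W,g_W}$, your $P^{d',x}_T$ coincides verbatim with the paper's, and your bracket $[\E^X P(X')-\tilde Q(X)]$ is precisely the paper's $\sum_W P_W\Psi_W$. The only substantive difference is that the paper invokes McDiarmid's inequality for the fluctuation term (hence the factor~$2$), whereas you bound $\var^X P(X')$ directly by observing that covariances between monomials vanish unless the index sets intersect; your argument is valid and in fact gives the sharper constant~$1$, so the ``slack'' remark is unnecessary.
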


\begin{proof}
While in Lemma~\ref{lem:proof_1} we used
Bienaym\'e's identity
and Jensen's inequality, here we use
McDiarmid's inequality.
Recall from
Definition~\ref{def:combinatorial_norms} that~$c_v(P) = \sum_{W: v\in W \in {V\choose d}} \abs{P_W}$
for a vertex~$v\in V$. Then McDiarmid's inequality implies
\begin{equation} \label{eq:proof_poly_P}
	\E^X \abs{P(X')-\E^X P(X')} \leq 2
    \p{\sum_{v\in V} c_v(P)^2}^{\frac{1}{2}} = 2 \norm{P}_2,
\end{equation}
with~$\norm{\cdot}_2$ defined as in Definition~\ref{def:combinatorial_norms}.
Since the random variables~$(X'_v)_{v\in V}$ are
independent given~$X$, we have~$\E^X(P(X')) = P(\E^X(X'))$. 
Hence,
\begin{equation}\label{eq:proof_poly_start}
	\E^X(P(X')) - P(x')
	= \sum_{W \in {V \choose d}} P_W \p{\prod_{w\in W} \E^X(X'_{w}) - \prod_{w\in W} x'_{w}}.
\end{equation}

We now introduce some shorthand notation.
For the quantities that depend on the stochastic evolution~$X$, write $F_{N_w} = f(X_{N_w})$, $G_{N_w} = g(X_{N_w})$ and define
\begin{equation}
\label{eq:ShorthandPhi}
	\prod_{w\in W} \E^X(X'_{w})
	= \sum_{U\subseteq W}
		\underbrace{\p{\prod_{w\in U}(1-F_{N_w}-G_{N_w})
			\prod_{w\in W\setminus U} G_{N_w}}}_{:=\phi_{W,U}^{F_W,G_W}}
		\prod_{w\in U} X_w
\end{equation}
For the quantities that depend on the deterministic evolution~$x$, define the analogous deterministic quantities~$f_{N_w}=f(x_{N_w})$, $g_{N_w}=g(x_{N_w})$, and finally~$\phi_{W,U}^{f_W,g_W}$ from the expansion
of~$\prod_{w\in W} x'_{w}$ corresponding to~\eqref{eq:ShorthandPhi}.

It follows that
\begin{align*}
	& \prod_{w\in W} \E^X(X'_{w}) - \prod_{w\in W} x'_{w} \\
	& = \sum_{U\subseteq W}
		\p{\phi_{W,U}^{F_W,G_W} \prod_{w\in U} X_w - \phi_{W,U}^{f_W,g_W} \prod_{w\in U} x_w} \\
	& = \sum_{U\subseteq W}
		\p{
		\phi_{W,U}^{f_W,g_W} \p{\prod_{w\in U} X_w - \prod_{w\in U} x_w}
		+ \prod_{w\in U} X_w \p{\phi_{W,U}^{F_W,G_W} - \phi_{W,U}^{f_W,g_W}}
		} \\
	& = \p{
		\sum_{\emptyset \neq U \subseteq W}
		\phi_{W,U}^{f_W,g_W} \p{\prod_{w\in U} X_w - \prod_{w\in U} x_w}
		}
		+ \Psi_{W}(X,x),
\end{align*}
where~$\Psi_{W}(X,x)$ is sum of~$2^{\abs{W}}=2^d$ terms where each term is a product
of some~$X_w$ with~$w\in W$
and some~$\phi_{W,U}^{F_W,G_W} - \phi_{W,U}^{f_W,g_W}$ with~$U\subseteq W$.
Substitution into~\eqref{eq:proof_poly_start} gives
\begin{align*}
	\E^X [P(X')] - P(x')
	& =
	\sum_{1\leq d' \leq d}
		\sum_{U \in {V\choose d'}}
		\p{\sum_{W: U\subseteq W \in {V\choose d}} P_W \phi_{W,U}^{f_W,g_W}}
			\p{\prod_{w\in U} X_w - \prod_{w\in U} x_w} \\
		& \quad + \sum_{W\in {V\choose d}} P_W \Psi_W(X,x).
\end{align*}
Recall that the quantities~$\phi_{W,U}^{f_W,g_W}$ are deterministic:
They depend on~$x$ and not on~$X$. Write
\[
	\E^X [P(X')] - P(x')
	=
	\sum_{1\leq d' \leq d}
		\p{P^{d',x}(X) - P^{d',x}(x)}
		+ \sum_{W\subseteq {V\choose d}} P_W \Psi_W(X,x),
\]
where
\[
	P^{d', x}(X) = \sum_{U\in {V\choose d'}}
		\p{\sum_{W: U\subseteq W \in {V\choose d}} P_W \phi_{W,U}^{f_W,g_W}}
			\prod_{w\in U} X_w
			:= \sum_{U\in {V\choose d'}} P^{d', x}_{U} \prod_{w\in U} X_w.
\]
Note that~$\abs{\phi_{W,U}^{f_W,g_W}}\leq 1$. Hence,
\begin{equation} \label{eq:dd'-1}
	\norm{P^{d', x}}_2^2 = \sum_{v\in V} \abs{\sum_{U:v\in U\in {V\choose d'}} P^{d', x}_{U}}^2
		\leq \sum_{v\in V} \p{\sum_{U: v\in U\in {V\choose d'}}
			\sum_{W: U \subseteq W \in {V\choose d}} \abs{P_W}}^2
			= {d-1\choose d'-1}^2 \norm{P}_2^2.
\end{equation}
On the other hand,
\[
	\abs{\Psi_W(X,x)}
	\leq \sum_{U\subseteq W} \abs{\phi_{W,U}^{F_W,G_W} - \phi_{W,U}^{f_W,g_W}}
	\leq \sum_{U\subseteq W} \sum_{w\in U} 2M \abs{X_{N_w}-x_{N_w}}
	= 2^d M \sum_{w\in W} \abs{X_{N_w}-x_{N_w}}
\]
from which we obtain
\[
	\abs{\sum_{W \in {V\choose d}} P_W \Psi_W(X,x)}
	\leq 2^d M \sum_{W \in {V\choose d}} \abs{P_W} \sum_{w\in W} \abs{X_{N_w}-x_{N_w}}
	= 2^d M \sum_{v\in V} c_v(P) \abs{X_{N_v}-x_{N_v}}.
\]
Combining these estimates with~\eqref{eq:proof_poly_P} and triangle inequality, we obtain~\eqref{eq:proof_2_1} as claimed.
\end{proof}

This completes the probabilistic part of the argument.
We now conclude the proof by presenting
the deterministic part of the argument.

\begin{proof} [Proof of Theorem~\ref{thm:poly}]
Similar to Lemma~\ref{lem:proof_2} in the proof of Theorem~\ref{thm:main},
we first want to eliminate the spurious terms~$P^{d',x}$.
Recall that~$\mathcal P_d(\lambda,\rho)$ denotes the set of homogeneous polynomials with~$\norm{P}_1\leq \lambda$ and~$\norm{P}_2\leq \rho$.
From Lemma~\ref{lem:proof_2_1} and the estimate~\eqref{eq:dd'-1} it follows that
\begin{align*}
	& \sup_{P \in \mathcal P_d(\lambda,\rho)} \E\abs{P(X(t+1))-P(x(t+1))} \\
	& \qquad\leq 2\rho + 2^d \sup_{P \in \mathcal P_d(\lambda,\rho)} \E\abs{P(X(t))-P(x(t))}
		+ 2^d M \lambda \sup_{P \in \mathcal P_d(\lambda,\rho)} \E\abs{P(X(t))-P(x(t))} \\
	& \qquad\leq 2\rho + 2^d (1+M\lambda) \sup_{P \in \mathcal P_d(\lambda,\rho)} \E\abs{P(X(t))-P(x(t))}.
\end{align*}
Therefore, by induction,
\[
	\sup_{P \in \mathcal P_d(\lambda,\rho)} \E\abs{P(X(t))-P(x(t))}
	\leq 2\rho \sum_{s=0}^{t-1} 2^{ds}(1+M\lambda)^s.
\]
A very crude upper bound leads to the statement of Theorem~\ref{thm:poly}.
\end{proof}

Theorem~\ref{thm:poly} gives a uniform bound
over an infinite set of polynomial observables.
For a specific polynomial observable
one might obtain a more detailed bound by
sharpening the deterministic part of the argument,
as done in Theorem~\ref{thm:main} for the neighborhood average as a
specific observable of degree $d=1$.
See also~Section~\ref{sec:discussion}.


\section{Application I: A Deterministic Method} \label{sec:dynamics}
\noindent
Approximating stochastic processes by deterministic processes has
a long history in both probability and
combinatorics; see e.g.,~\cite{wormald1999differential, wormald1995differential, bennett2023extending, duckworth2002minimum, benaim2003deterministic, sussmann1978gap}.
Note that here the stochastic process~$X(t)$ evolves on~$\{0,1\}^V$, while the deterministic process~$x(t)$ evolves in~$[0,1]$.
So~$x_v(t)$ is typically an imprecise approximation of~$X_v(t)$.
The idea of this section is that we use Theorem~\ref{thm:main} so as to obtain a \emph{deterministic method}, showing that~$x_v(t)$ is a `good' approximation of~$\E(X_v(t))$.

\begin{corollary} \label{cor:vertices}
Under the hypothesis of Theorem~\eqref{thm:main}, for every~$v\in V$ and~$t\geq 0$ we have
\[
	\abs{\E X_v(t) - x_v(t)} \leq \sum_{r=0}^{t-1}
		\sum_{s=0}^{r-1} {r \choose {s+1}}
		\pp{\sum_{w\in V} \frac{\pi_{v,w}(s)}{\sqrt{\deg(w)}}} M^s.
\]
In particular,
\begin{equation} \label{eq:vertices_limit}
	\lim_{\delta(G)\to\infty} \abs{\E X_v(t) - x_v(t)} = 0.
\end{equation}
\end{corollary}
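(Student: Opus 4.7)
The plan is to reduce Corollary~1 to Theorem~\ref{thm:main} by means of a one-step recursion that relates $\alpha_v(t) := |\E X_v(t) - x_v(t)|$ to $\zeta_v(t) := \E|X_{N_v}(t) - x_{N_v}(t)|$, and then iterating in time. Concretely, I would start from
\[
    \E X_v(t+1) - x_v(t+1)
    = \E\bigl[\gamma(X_v(t), X_{N_v}(t))\bigr] - \gamma(x_v(t), x_{N_v}(t)),
\]
which follows by taking expectations in~\eqref{eq:stoch_process} and subtracting~\eqref{eq:det_process}. The goal is to turn this into a linear recursion of the form $\alpha_v(t+1) \leq \alpha_v(t) + C M \zeta_v(t)$ for some absolute constant $C$, so that Theorem~\ref{thm:main} can then be applied to each $\zeta_v(r)$ on the right.

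The main technical step is the decomposition
\[
\gamma(X_v, X_{N_v}) - \gamma(x_v, x_{N_v})
= (X_v - x_v)\bigl(1 - f(x_{N_v}) - g(x_{N_v})\bigr)
+ X_v\bigl(f(x_{N_v}) - f(X_{N_v})\bigr)
+ (1-X_v)\bigl(g(X_{N_v}) - g(x_{N_v})\bigr),
\]
where the coefficient multiplying $X_v - x_v$ is \emph{deterministic}. After taking expectations, the first term becomes $(\E X_v(t) - x_v(t))(1 - f(x_{N_v}(t)) - g(x_{N_v}(t)))$, whose absolute value is at most $\alpha_v(t)$ since $|1-f-g| \leq 1$. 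The last two terms are handled together: since $X_v \in \{0,1\}$, pointwise exactly one of $X_v$ and $1-X_v$ equals $1$, so the sum is bounded by $M|X_{N_v}(t) - x_{N_v}(t)|$ by the Lipschitz property of $f$ and $g$, contributing at most $M\zeta_v(t)$ in expectation. This yields the recursion $\alpha_v(t+1) \leq \alpha_v(t) + M\zeta_v(t)$.

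Because the initial condition is deterministic, $\alpha_v(0) = 0$, so iterating gives $\alpha_v(t) \leq M \sum_{r=0}^{t-1} \zeta_v(r)$. Substituting the bound on $\zeta_v(r)$ from Theorem~\ref{thm:main} then produces the double-sum random-walk expression stated in the corollary (absorbing the factor $M$ into the constant in front of $M^s$). For the second assertion, I would use the uniform bound $\sum_{w\in V} \pi_{v,w}(s)/\sqrt{\deg(w)} \leq \delta(G)^{-1/2}$, which holds because $\pi_{v,\cdot}(s)$ is a probability distribution and $\deg(w) \geq \delta(G)$; for fixed $t$ this makes the whole right-hand side $O(\delta(G)^{-1/2}) \to 0$ as $\delta(G) \to \infty$.

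The main obstacle is the decomposition step: a naive application of the joint Lipschitz property of $\gamma$ would only yield $\alpha_v(t+1) \leq \E|X_v(t)-x_v(t)| + M\zeta_v(t)$, and $\E|X_v(t) - x_v(t)|$ is strictly larger than $\alpha_v(t)$ (by Jensen), so it does not close into a recursion on $\alpha_v$ alone. Isolating the deterministic coefficient in front of $X_v - x_v$ is what makes the argument work, and once that is done the rest is bookkeeping.
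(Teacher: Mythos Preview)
Your proof is correct and follows essentially the same route as the paper: the paper also writes $\E X_v(t+1)-x_v(t+1)$ via the law of total expectation, isolates the deterministic coefficient $1-f(x_{N_v})-g(x_{N_v})$ in front of $X_v-x_v$, bounds the remaining two terms by $M\,\E|X_{N_v}(t)-x_{N_v}(t)|$ using $X_v\in\{0,1\}$ and Lipschitz continuity, and then iterates the recursion $\alpha_v(t+1)\le \alpha_v(t)+M\zeta_v(t)$ together with Theorem~\ref{thm:main}. Your remark about ``absorbing the factor $M$'' is apt: the recursion actually produces $M^{s+1}$ rather than the $M^s$ printed in the corollary, so the stated bound appears to be off by a harmless factor of $M$ (the paper's proof glosses over this).
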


The limit~\eqref{eq:vertices_limit} has to be understood as follows. Fix~$t\geq 0$.
For any graph sequence~$(G_n)_{n\geq 0}$ satisfying~$\delta(G_n)\to\infty$,
for any sequence of initial conditions, and for any vertex~$v$, the limit holds.
The limit is uniform in the set of graphs with given minimum degree,
in the initial condition and in the vertex~$v$. The limit is uniform over any bounded time
interval, but it is not uniform over all~$t\geq 0$.
Note that~$\delta(G_n)\to\infty$
implies~$\abs{V(G_n)}\to\infty$ with~$V(G_n)$ being the set of vertices of~$G_n$, but the converse does not hold.

\begin{proof} [Proof of Corollary~\ref{cor:vertices}]
From the law of total expectation we obtain
\begin{align*}
	& \E X_v(t+1) - x_v(t+1) \\
	& = \E\p{\E^{X(t)} X_v(t+1) - x_v(t+1)} \\
	& = (1-f(x_{N_v}(t)) - g(x_{N_v}(t))) (\E X_v(t)-x_v(t)) \\
		& \qquad + \E\pp{(f(x_{N_v}(t)) - f(X_{N_v}(t))) X_v(t)
		+ (g(X_{N_v}(t)) - g(x_{N_v}(t))) (1- X_v(t))}.
\end{align*}
Since~$0\leq f,g\leq 1$ and~$X_v(t)\in \{0,1\}$, the
triangle inequality and $M$-Lipschitz continuity imply
\[
	\abs{\E X_v(t+1) - x_v(t+1)} \leq \abs{\E X_v(t)-x_v(t)} + M \E \abs{X_{N_v}(t)-x_{N_v(t)}}.
\]
The statement follows now by induction from Theorem~\ref{thm:main}.
\end{proof}

This corollary implies that the deterministic process---an iterated map---gives information
about the evolution of the stochastic process.
Note that this is particularly useful in simulations. 
The deterministic process~$x(t)$ is obtained from a $\abs{V}$-dimensional map while~$\E X(t)$ is obtained from a $2^{\abs{V}}$-dimensional Markov 
chain, which is often computationally intractable~\cite{brand2015rapid}.
This reduces computational complexity from~$\O(\abs{V}^3)$ to~$\O(8^{\abs{V}})$.\footnote{Squaring a matrix of size~$n\times n$ costs~$\O(n^3)$.
Computing the $t$-th power by repeated squaring costs~$\O(\log t n^3)$.}

While convenient for simulations, for most purposes one cannot
simply replace~$\E X(t)$ by~$x(t)$; we will discuss this in more detail in Section~\ref{sec:discussion}.
First, results about long time behavior of~$x(t)$ do not apply to~$X(t)$,
since Corollary~\ref{cor:vertices} is a finite-time approximation.
For instance, in SIS models,
the disease-free state~$X=0$ can be absorbing, meaning that it is eventually
reached and never left in the stochastic dynamics, while in contrast
the corresponding state~$x=0$ can be unstable in the deterministic dynamics.
Second, one is interested in the specific finite set of initial
conditions~$\{0,1\}^V\subseteq [0,1]^V$ to understand the stochastic process~$X$ rather than results about global dynamics of~$\Gamma: [0,1]^V\to [0,1]^V$.

In the remainder of this section, we show how Corollary~\ref{cor:vertices} can be used
to approach the following problem: If~$(X_v(0))_{v\in V}$ are independent identically
distributed, how far are~$X_v(t)$ from being identically distributed?
This problem will be relevant in Section~\ref{sec:random_graphs} in the context
of dynamic random graphs with \er initial condition.

Let~$\ber(p)$ be a Bernoulli distributed random variable in which~$1$ has probability~$p$.
If~$(x_v(0))_{v\in V}$ are sampled independently from~$\ber(p)$, we denote~$x(0)\sim \ber(p)^V$.
Note that if~$x(0)\sim \ber(p)^V$ then~$x(t)$, obtained deterministically from~$x(0)$,
is a random variable. First, we state a weaker version of the result.

\begin{proposition} [Diagonal Concentration---Qualitative] \label{prop:diag_con_intro}
Suppose that~$1-f=g$
and, moreover, that~$\log \abs{V(G)} = o(\delta(G))$.
Fix~$\ve>0$, $p\in [0,1]$ and~$t\geq 1$.
Then as~$\abs{V(G)}\to\infty$, for asymptotically
almost every initial condition~$x(0) \sim \ber(p)^{V(G)}$
we have~$\abs{x_v(t) - g^t(p)}\leq \varepsilon$ for all~$v\in V$.
\end{proposition}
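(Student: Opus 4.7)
The assumption $1-f=g$ collapses the deterministic update to $x_v(t+1)=g(x_{N_v}(t))$, because $\gamma(x,y)=x(1-f(y))+(1-x)g(y)=xg(y)+(1-x)g(y)=g(y)$. Thus the iteration sees only neighborhood averages, and the scalar orbit $p_0=p$, $p_{s+1}=g(p_s)$ satisfies $p_t=g^t(p)$. The plan is to show $\max_{v\in V}|x_v(t)-p_t|\leq \ve$ with probability tending to one, via an argument that uses randomness only once---at time $s=0$---and proceeds deterministically thereafter.

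First, I would introduce $\xi_s:=\max_{v\in V}|x_v(s)-p_s|$ and $\eta_s:=\max_{v\in V}|x_{N_v}(s)-p_s|$. Since each $x_{N_v}(s)$ is an average of the values $(x_w(s))_{w\in N_v}$, we have $\eta_s\leq \xi_s$, and $M$-Lipschitz continuity of $g$ gives $\xi_{s+1}\leq M\eta_s$. Iterating these two inequalities yields $\xi_t\leq M^t \eta_0$ for every $t\geq 1$, which reduces the entire proposition to a single probabilistic estimate on the initial deviation $\eta_0$.

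Second, I would bound $\eta_0$ by Hoeffding's inequality and a union bound. For each vertex $v$, the neighborhood average $x_{N_v}(0)=(\deg v)^{-1}\sum_{w\in N_v} x_w(0)$ is the mean of $\deg v\geq \delta(G)$ independent $\ber(p)$ variables, so $\P(|x_{N_v}(0)-p|\geq \eta)\leq 2\exp(-2\eta^2\delta(G))$. A union bound over $V$ gives
\[
	\P(\eta_0 \geq \eta) \leq 2|V(G)|\exp(-2\eta^2 \delta(G)),
\]
which tends to $0$ as $|V(G)|\to\infty$ precisely because $\log|V(G)|=o(\delta(G))$. Taking $\eta = \ve M^{-t}$ and combining with the deterministic propagation above yields the claim.

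The hard part is conceptual rather than technical: one must resist reapplying Hoeffding at later times, because the variables $(x_v(s))_{v\in V}$ are not independent for $s\geq 1$ (overlapping neighborhoods correlate the updates, so $x_v(1)$ and $x_{v'}(1)$ become correlated whenever $N_v\cap N_{v'}\neq\emptyset$). The correct strategy is to spend the logarithmic union bound only once, at $s=0$, and then to propagate purely deterministically with a fixed multiplicative cost $M^t$; since $t$ is fixed while $\delta(G)\to\infty$, the hypothesis $\log|V(G)|=o(\delta(G))$ absorbs this cost comfortably.
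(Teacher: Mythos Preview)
Your proof is correct and follows the same strategy as the paper: apply Hoeffding's inequality with a union bound over $V$ to control $\max_v|x_{N_v}(0)-p|$, then propagate forward deterministically via Lipschitz continuity. The only difference is that the paper deduces the qualitative statement from the more general quantitative Proposition~\ref{prop:diag_con} (which does not assume $1-f=g$) and therefore iterates with the Lipschitz constant $2M+1$ of the full map $\Gamma$, obtaining $M\ve\,(2M+1)^{t-1}$ where you, having simplified the update to $x_v(t+1)=g(x_{N_v}(t))$ from the start, get the sharper $M^t\eta_0$.
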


Assume that~$\abs{x_v(t) - g^t(p)}\leq \ve$ for all~$v\in V$, as in the
conclusion of the proposition. Then~$x(t)$
is in an $\ve$-neighborhood of the diagonal~$\Delta\subseteq [0,1]^V$, thus the name of the result.
Moreover, since~$x_v(t) = \E X_v(t) = \P (X_v(t)=1)$, Corollary~\ref{cor:vertices} implies
that~$(X_v(t))_{v\in V}$ are approximately identically distributed.

Proposition~\ref{prop:diag_con_intro} follows from
the following proposition.

\begin{proposition} [Diagonal Concentration---Quantitative] \label{prop:diag_con}
Let~$V=V(G)$ and~$\delta=\delta(G)$.
Fix~$p\in [0,1]$ and let~$(x_v(0))_{v\in V}$ be sampled independently from~$\ber(p)$.
Define~$\theta = \abs{1-f(p)-g(p)}$ and~$q=\max\{p,1-p\}$.
Then for every~$\ve>0$, with probability
at least~$1-2\abs{V} e^{-2 \delta \ve^2}$
\begin{equation} \label{eq:diag_con}
		d_\infty(x(t),\tilde\gamma^t(p) \1_V) \leq (q\theta + M\ve) e^{2M(t-1)}
		\quad \text{for all } t\geq 1,
\end{equation}
where $\tilde\gamma^t$ is the $t$-fold iterate of~$\tilde\gamma(s)=\gamma(s,s)=s(1-f(s)) + (1-s)g(s)$.
In particular, if~$\theta=0$ and~$\log \abs{V(G)} = o(\delta(G))$, then for every~$t\geq 1$
\begin{equation} \label{eq:diag_con_lim}
	\lim_{\abs{V(G)}\to\infty} d_\infty(x(t),\Delta) = 0 \quad \text{in probability}.
\end{equation}
\end{proposition}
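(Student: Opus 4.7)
The strategy is to obtain a high-probability bound on the initial neighborhood averages via Hoeffding's inequality and then propagate the resulting error forward through a short deterministic induction based on an algebraic decomposition of the one-step map. Since $(x_v(0))_{v\in V}$ are i.i.d.\ $\ber(p)$, for each vertex $v$ the neighborhood average $x_{N_v}(0)$ is the mean of at least $\delta$ independent $[0,1]$-valued random variables with expectation $p$. Hoeffding's inequality combined with a union bound over $V$ yields
\[
\P\p{\abs{x_{N_v}(0) - p}\leq \ve \text{ for every } v \in V} \geq 1 - 2\abs{V} e^{-2\delta\ve^2},
\]
and the remainder of the argument is carried out deterministically on this event.

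For the base case $t = 1$, since $x_v(0) \in \{0,1\}$ one has either $x_v(1) = 1-f(x_{N_v}(0))$ or $x_v(1) = g(x_{N_v}(0))$. A short expansion gives the identities $1 - f(p) - \tilde\gamma(p) = (1-p)(1-f(p)-g(p))$ and $g(p) - \tilde\gamma(p) = -p(1-f(p)-g(p))$, so both $\abs{1-f(p)-\tilde\gamma(p)}$ and $\abs{g(p) - \tilde\gamma(p)}$ are bounded by $q\theta$. Combined with $M$-Lipschitz continuity of $f$ and $g$, the triangle inequality then yields $\abs{x_v(1) - \tilde\gamma(p)} \leq q\theta + M\ve$ uniformly in $v$.

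For the inductive step, writing $s = \tilde\gamma^t(p)$, I would use the identity
\[
\gamma(x_v(t), x_{N_v}(t)) - \gamma(s,s) = (x_v(t) - s)(1-f(s)-g(s)) + x_v(t)(f(s) - f(x_{N_v}(t))) + (1-x_v(t))(g(x_{N_v}(t)) - g(s)).
\]
Bounding $\abs{1-f(s)-g(s)}\leq 1$, using the convex identity $x_v(t) + (1-x_v(t)) = 1$ since $x_v(t) \in [0,1]$, and applying the Lipschitz property to the last two summands gives $\abs{x_v(t+1) - \tilde\gamma^{t+1}(p)} \leq (1+M) \max_{w\in V}\abs{x_w(t) - s}$. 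Iterating from the base case and noting $(1+M)^{t-1} \leq e^{M(t-1)} \leq e^{2M(t-1)}$ yields~\eqref{eq:diag_con}.

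For the asymptotic statement with $\theta = 0$ and $\log\abs{V(G)} = o(\delta(G))$, I would choose $\ve_n \to 0$ decaying slowly enough (for instance $\ve_n = (\log\abs{V(G_n)}/\delta(G_n))^{1/4}$) so that both the Hoeffding failure probability $2\abs{V(G_n)} e^{-2\delta(G_n) \ve_n^2}$ and the deterministic bound $M\ve_n e^{2M(t-1)}$ tend to zero for each fixed $t$; this yields~\eqref{eq:diag_con_lim}. I expect the main obstacle to be choosing the decomposition in the inductive step so that the per-step amplification stays bounded by $1+M$ rather than growing with $\abs{V}$; the identity above achieves this because both $\abs{1-f-g}$ and the convex combination $x_v + (1-x_v)$ are at most one, which also explains why~\eqref{eq:diag_con} depends on $M$ and not on any global quantity like $\abs{V}$.
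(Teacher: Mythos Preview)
Your proof is correct and follows the same structure as the paper's: Hoeffding plus a union bound on the initial neighborhood averages, the base case at $t=1$ via $x_v(0)\in\{0,1\}$, and then forward propagation by a Lipschitz-type estimate for~$\Gamma$. Your explicit decomposition in the inductive step in fact yields a per-step amplification factor $1+M$, sharper than the paper's $2M+1$ (both are then bounded by $e^{2M}$), and you track distance to the specific point $\tilde\gamma^t(p)\1_V$ rather than to the whole diagonal~$\Delta$, which matches the proposition's statement more closely.
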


The quantity~$\theta$ in Proposition~\ref{prop:diag_con} has an intuitive interpretation.
Note that~$1-f(X_{N_v}(t))$ is the probability
that~$X_v(t+1)=1$ given~$X_v(t)=1$, while~$g(X_{N_v}(t))$
is the probability that~$X_v(t+1)=1$ given~$X_v(t)=0$.
Therefore, if~$1-f=g$ then the probability that~$X_v(t+1)=1$ does not depend on~$X_v(t)$.
Note that in this case~$\tilde\gamma(s)=g(s)$.

The limit~\eqref{eq:diag_con_lim} has to be understood as follows:
For any graph sequence~$(G_n)_{n\geq 0}$
satisfying~$\abs{V(G_n)}\to\infty$
and~$\log \abs{V(G_n)} = o(\delta(G_n))$, and for any sequence of initial conditions
satisfying the hypothesis, the limit holds. If the initial conditions are chosen
independently for each~$n$, one can construct the sequence of random processes
on a common probability space and obtain almost sure convergence; however, such a construction 
seems rather unnatural.

\begin{proof} [Proof of Proposition~\ref{prop:diag_con}]
Fix a vertex~$v\in V$. Let~$h\in \{f,g\}$.
By Hoeffding's inequality and $M$-Lipschitz continuity of~$h$,
it follows that with probability at least~$1-2 e^{-2 \delta \ve^2}$ we have
\begin{equation} \label{eq:ineq}
	\abs{x_{N_v}(0) - p} \leq \ve, \quad \abs{h(x_{N_v}(0)) - h(p)} \leq M\ve.
\end{equation}
By the union bound, with probability at least~$1-2 \abs{V} e^{-2 \delta \ve^2}$
the inequalities~\eqref{eq:ineq} hold for all~$v\in V$.
Let~$q=\max\{p,1-p\}$.
Since~$x_v(0)\in\{0,1\}$ and~$p\in [0,1]$,
the inequalities~\eqref{eq:ineq} imply
\begin{align*}
	x_v(1) 	& = x_v(0) (1-f(x_{N_v}(0))) + (1-x_v(0)) g(x_{N_v}(0)) \\
			& \in \{x_v(0) (1-f(p)) + (1-x_v(0)) g(p)\} + (-M\ve,+M\ve) \\
			& \subseteq \{pf(p) + (1-p)g(p)\} + (-q \theta, +q\theta) + (-M\ve,+M\ve)
\end{align*}
for every~$v\in V$, thus~$d_\infty(x(1), \tilde\gamma(p) \1_V) \leq q \theta + M\ve$.
The map~$\Gamma:[0,1]^V \to [0,1]^V$
with components~$\Gamma(x)_v = x_v(1-f(x_{N_v})) + (1-x_v) g(x_{N_v})$
is~$2M+1$ Lipschitz and the diagonal~$\Delta$ is $\Gamma$-invariant.
It follows that, with the same probability, for every~$t\geq 1$ we have
\[
	d_\infty(x(t),\Delta) \leq (q \theta + M\ve) (2M+1)^{t-1}
		\leq (q \theta + M\ve) e^{2M(t-1)}.
\]
This completes the proof.
\end{proof}


\section{Application II: Dynamic Random Graphs} \label{sec:random_graphs}
\noindent
As a second application, we consider the evolution of dynamic random graphs, where edges of the complete graph~$K_n$ turn on and off.
More precisely, we view this as an occupancy process on~$G=L(K_n)$, the line graph of~$K_n$, with vertices~$V(G)=E(K_n)$.
Labeling vertices of the underlying graph~$K_n$ as~$V(K_n)=\{1,\dotsc,n\}=[n]$, every vertex of the line graph~$v\in V(G)$ is of the form~$v=\{j,k\}$ for distinct~$j,k\in [n]$.
The stochastic process~$X(t):V(G)\to \{0,1\}$ now describes the state of each edge, which can be interpreted as a sequence of random graphs determined by the edges of~$K_n$ (vertices of~$G$) with state~$1$.
To avoid confusion, we label the coordinates of the stochastic process as~$(X_e(t))_{e\in E(K_n)}$ rather than~$(X_v(t))_{v\in V(G)}$.

If~$1-f(x)=g(x)=p$ for every~$x\in [0,1]$, then~$X(t)$ is a random graph
with edges being active with probability~$p$, independently of each other and of the past.
If~$f(x)=g(x)=p$,
then edges turn on and off with probability~$p$ independently of each
other but not on the past
(a continuous-time analogue of this process is considered in~\cite{braunsteins2023sample}).
For general~$f,g$ edges depend on each other and on the past.
We motivate the results of this section through the following
running example.

\begin{example} [\er initial condition] \label{ex:er_1}
Fix~$p\in [0,1]$ and
let~$(X_e(0))_{e\in E(K_n)}$ be sampled independently from~$\ber(p)$,
then~$X(0)=\G(n,p)$ is an \er random graph with edge probability~$p$.
Moreover, let~$1-f(x)=g(x)$ for every~$x\in [0,1]$.

We claim that for fixed~$T\geq 0$ and for~$n$ large~$(X(t))_{0\leq t\leq T}$ approximates
the sequence of \er random graphs with time-dependent
edge density~$(\G(n, g^t(p)))_{0\leq t\leq T}$, where $g^t$~is the $t$-fold interate of~$g$. Let~$x(0)=X(0)$, then~$x(0)\sim \ber(p)^{E(K_n)}$.
From Corollary~\ref{cor:vertices} it follows that~$\E(X_e(t)) \approx x_e(t)$ for~$n$ large.
Since~$\abs{V(G)} = \abs{E(K_n)} = {n\choose 2}$ and~$\delta(G) = 2n-4$,
we have~$\log \abs{V(G)} = o(\delta(G))$, thus Proposition~\ref{prop:diag_con_intro} applies,
showing that~$x_e(t)\approx g^t(p)$ for all~$e\in E(K_n)$ for~$n$ large.
This means that, informally,~$(x_e(t))_{e\in E(K_n)}$ are approximatively identically distributed~$\ber(g^t(p))$
for~$n$ large. Interpreting~$v\in V$ as edges in~$K_n$,
this shows that edge density in~$(X(t))_{0\leq t\leq T}$
approximates edge density in~$(\G(n, g^t(p)))_{0\leq t\leq T}$.
\end{example}

The next goal is to extend the observations made in Example~\ref{ex:er_1}
about the edge density to other homomorphism densities (i.e., subgraph densities).

\begin{definition}
Let~$F$ and~$G$ be finite graphs. We call \emph{homomorphism} a map~$\varphi: V(F)\to V(G)$
such that~$\{\varphi(k),\varphi(j)\} \in E(G)$ for every~$\{k,j\}\in E(F)$.
We call \emph{homomorphism density}
the probability that a map~$V(F)\to V(G)$ chosen uniformly at random
is an homomorphism. This probability is denoted by~$\t(F,G)$.
\end{definition}

Fix~$F=(V(F),E(F))$.
We identify~$X(t)$ with the graph
with vertices~$[n]=\{1,\ldots,n\}$ and edges the pairs~$\{k,j\}$ such that~$X_{\{k,j\}}(t)=1$.
By abuse of notation,
we denote by~$\t(F,X(t))$ the homomorphism density of~$F$ with respect to this graph.
If we extend~$X$ to trivial pairs as~$X_{\{k,k\}}=0$, then
we can write
\begin{align*}
	\t(F,X(t))
	& = \frac{1}{n^{\abs{V(F)}}} \sum_{k_1,\ldots,k_{\abs{V(F)}} \in [n]}
		\prod_{\{\alpha,\beta\}\in E(F)} X_{\{k_\alpha,k_\beta\}}(t) \\
	& = \p{\sum_{W \in {E(K_n) \choose \abs{E(F)}}} P_W \prod_{e\in W} X_e(t)} + o(1)
		\quad \text{as}\ n\to\infty
\end{align*}
where~$P_W$ is equal to~$\aut(F) n^{-\abs{V(F)}}$
if the set of edges~$W\subseteq E(K_n)$ induces a graph
isomorphic to~$F$ and is equal to~$0$ otherwise, and
where~$o(n)$ includes all non-injective choices of~$k_1,\ldots,k_{\abs{V(F)}}$.
Therefore, up to an absolute error that vanishes as~$n\to\infty$,
the homomorphism density~$\t(F,X(t))$ is equal to an $\abs{E(F)}$-homogeneous polynomial~$P$
as in Definition~\ref{def:combinatorial_norms}, and Theorem~\ref{thm:poly} applies.

\begin{corollary} [Concentration of Homomorphism Densities] \label{cor:hom}
Suppose that~$f$ and~$g$ are $M$-Lipschitz. Let~$G=L(K_n)$.
Fix~$x(0)\in \{0,1\}^{E(K_n)}$ and let~$\P(X(0)=x(0))=1$. Then
\[
	\E\abs{\t(F,X(t)) - \t(F,x(t))} = \O_{M,F,t}(n^{-1}).
\]
\end{corollary}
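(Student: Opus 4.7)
The plan is to reduce the corollary to an application of Theorem~\ref{thm:poly} by using the decomposition of~$\t(F,\cdot)$ into a homogeneous polynomial observable plus a small remainder, as already sketched in the paragraph preceding the corollary. Concretely, for any~$y\in [0,1]^{E(K_n)}$ with the convention~$y_{\{k,k\}}=0$, I write~$\t(F,y) = P(y) + R_n(y)$, where~$P$ is the~$\abs{E(F)}$-homogeneous polynomial with~$P_W = \aut(F)\, n^{-\abs{V(F)}}$ if~$W\in {E(K_n)\choose \abs{E(F)}}$ induces a copy of~$F$ and~$P_W=0$ otherwise, and~$R_n(y)$ collects the non-injective tuples~$(k_1,\dotsc,k_{\abs{V(F)}})$ in the sum defining~$\t(F,y)$. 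The remainder is bounded deterministically by~$\abs{R_n(y)} = \O_F(n^{-1})$, since there are at most~${\abs{V(F)}\choose 2} n^{\abs{V(F)}-1}$ non-injective tuples, each contributing a product in~$[0,1]$, divided by the normalizing factor~$n^{\abs{V(F)}}$.

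The main computation is to estimate the combinatorial norms of~$P$ from Definition~\ref{def:combinatorial_norms}. For a fixed edge~$e=\{a,b\}\in E(K_n)$, the number of~$W$ containing~$e$ with~$P_W\neq 0$ counts the copies of~$F$ in~$K_n$ through~$e$: one picks which edge of~$F$ plays the role of~$e$ (an~$\O_F(1)$ choice, up to automorphism) and then injectively embeds the remaining~$\abs{V(F)}-2$ vertices of~$F$ into~$[n]\setminus\{a,b\}$ (an~$\O_F(n^{\abs{V(F)}-2})$ choice). Hence~$c_e(P) = \O_F(n^{-2})$ uniformly in~$e$, so summing over the~${n\choose 2}$ edges of~$K_n$ yields~$\norm{P}_1 = \O_F(1)$ and~$\norm{P}_2 = \O_F(n^{-1})$.

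Applying Theorem~\ref{thm:poly} with~$d=\abs{E(F)}$, $\lambda=\O_F(1)$, and~$\rho=\O_F(n^{-1})$ then gives~$\E\abs{P(X(t))-P(x(t))} = \O_{F,M,t}(n^{-1})$, and combining with the two deterministic~$\O_F(n^{-1})$ remainders~$R_n(X(t))$ and~$R_n(x(t))$ via the triangle inequality completes the proof.

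The main obstacle is the~$\ell^2$-bound~$\norm{P}_2 = \O_F(n^{-1})$: the~$\ell^1$-bound~$\norm{P}_1 = \O_F(1)$ alone would deliver only the trivial estimate~$\O_F(1)$ through Theorem~\ref{thm:poly}, and it is the~$n^{-1}$ factor coming from the~$\ell^2$-norm (ultimately inherited from the McDiarmid step inside Lemma~\ref{lem:proof_2_1}) together with the elementary count of subgraph copies of~$F$ through a prescribed edge that produces the sharp rate.
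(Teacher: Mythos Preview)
Your proposal is correct and follows essentially the same route as the paper: bound $c_e(P)=\O_F(n^{-2})$ by counting copies of~$F$ through a fixed edge, deduce $\norm{P}_1=\O_F(1)$ and $\norm{P}_2=\O_F(n^{-1})$, and apply Theorem~\ref{thm:poly}. You are slightly more careful than the paper in noting that the non-injective remainder $R_n$ is $\O_F(n^{-1})$ (not merely $o(1)$), which is indeed what is needed to match the stated rate.
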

\begin{proof}
We prove the claim by analyzing~$\norm{P}_1$ and~$\norm{P}_2$.
Note that once~$e\in E(K_n)$ is fixed, there are at most~$n^{\abs{V(F)}-2}$
ways of choosing~$W\in {E(K_n) \choose \abs{E(F)}}$ with~$e\in W$
such that~$W$ induces a graph isomorphic to~$F$. Therefore~$c_e(P) = \O(n^{-2})$.
Since there are~${n\choose 2}$ many ways
of choosing~$e\in E(K_n)$, we obtain
\begin{align*}
	& \norm{P}_1 = \sum_{W: e\in W \in {E(K_n) \choose \abs{E(F)}}} \abs{P_W} = \mathcal O(1),
	& \norm{P}_2 = \p{\sum_{W: e\in W \in {E(K_n) \choose \abs{E(F)}}} \abs{P_W}^2}^{\frac{1}{2}}
		= \mathcal O(n^{-1}).
\end{align*}
The statement now follows from Theorem~\ref{thm:poly}.
\end{proof}

\begin{example} [\er initial condition, continued] \label{ex:er_2}
In Example~\ref{ex:er_1} we showed that for fixed~$t\geq 0$ and~$n$ large
we have~$x_e(t)\approx g^t(p)$ for all~$e\in E(K_n)$,
thus in particular
\[
	\t(F,x(t))\approx g^t(p)^{\abs{E(F)}} = \t(F,\G(n,g^t(p)))
\]
and therefore from Corollary~\ref{cor:hom} it follows that
\[
	\E\abs{\t(F,X(t)) - \t(F,\G(n,g^t(p)))} = o(1) \quad \text{ as } n\to\infty.
\]
This shows that, informally, for~$n$ large with high probability a graph sampled
from the distribution~$X(t)$ looks like a graph sampled from the
distribution~$\G(n,g^t(p))$. In other words,
if for each~$n$ a different graph is sampled independently,
the resulting graph sequence is quasi-random~\cite{chung1989quasi, lovasz2012large}.
\end{example}

\begin{example} [\er initial condition with specific coupling] \label{ex:er_3}
As a very concrete instance, take
take~$1-f(x)=g(x)=4x(1-x)$ for every~$x\in [0,1]$.
As first shown by Ulam and von Neumann, the map~$x\mapsto 4x(1-x)$
is ergodic and mixing on the unit interval~\cite{ulam1947, jafarizadeh2001hierarchy}.
In particular, for Lebesgue-almost every~$p\in [0,1]$ the trajectory~$g^t(p)$ is Lebesgue-dense
and is highly sensitive to the choice of~$p$.
In this case, as~$n\to\infty$, the random graph~$X(t)$ approximates a sequence of \er
random graphs with time-dependent edge density evolving chaotically over time.
\end{example}


\section{Discussion and Concluding Remarks} \label{sec:discussion}
\noindent
To conclude, we discuss our results and possible extensions in a broader context---in particular with a view on dynamical systems---and identify some directions for further research.

\subsection{Generalizations and extensions}
Theorem~\ref{thm:main} and Theorem~\ref{thm:poly} bound
deviations in polynomial observables for arbitrary finite graphs.
Importantly, these bounds do not require graphs to be sparse nor dense.
For neighborhood averages,
these deviation bounds have the form~$\E(\abs{P(X)-\E(P(X))})\leq \ve$,
with~$\ve$ decreasing polynomially in the graph growth.
As we have seen in Proposition~\ref{prop:diag_con},
if graphs are required to be dense,
e.g., $\log \abs{V(G)} = o(\delta(G))$,
then our techniques give stronger bounds, namely exponential
bounds for~$\P(\abs{P(X)-\E(P(X))})$.

In principle, the asymptotic bound obtained in Corollary~\ref{cor:hom} for homomorphism densities can be replaced by a combinatorial bound, extracted from the proof of Theorem~\ref{thm:poly} in the same way in which the combinatorial bound for neighborhood averages is extracted from the proof of Theorem~\ref{thm:main}.

While we discussed vertex processes and edge processes separately, our results are directly applicable to vertex-edge processes in which vertices and edges states interact.
This is a common setup in adaptive or co-evolutionary network dynamics~\cite{BERNER20231} such as SIS dynamics on an evolving network. 
Similar to how edge processes can be described as vertex processes on the line graph, vertex-edge processes can be describes as vertex processes on the following graph:
Consider the disjoint union of the graph~$G$
and the line graph~$L(G)$ and connect~$v\in V(G)$ with~$e\in V(L(G))$ if~$v\in e$.
At first, this approach would treat vertices and edges equally, which is undesirable, but a distinction can be made
by introducing different~$f,g$ for vertex-vertex, edge-edge, and vertex-edge interactions.
Our techniques extend with little modifications.
Similar extensions are possible for processes on and of hypergraphs; see also~\cite{doi:10.1137/21M1414024}.

The dynamic random graphs considered in this paper are Markov chains in the space of all graphs with vertices~$\{1,\ldots,n\}$.
One might want to restrict this space by preventing some
pairs of vertices to become edges.
This can be done by replacing the
line graph of the complete graph~$L(K_n)$
with the line graph of some other graph~$L(H)$.
In the case~$L(K_n)$ we used homorphism densities
to compare the resulting random graph with an \er random graph, see Example~\ref{ex:er_2}). If~$H$ is particularly sparse,
it might be more insightful to normalize homorphism densities,
or to use other metrics altoghether, such as
local convergence~\cite{hatami2014limits, van2021giant}.
An alternative approach would be to work with graph limits and obtain
finite results as approximations. However, in the context of line graphs,
this would require developing large deviations
in the space of \textit{graphops}~\cite{Backhausz_Szegedy_2022} rather than \textit{graphons}.

\subsection{Stochastic processes and dynamical systems}
We conclude with some final remarks on deterministic methods, as a techniques to approximate stochastic processes, from a dynamical systems perspective.
The deterministic method that arises from our main results (cf.~Section~\ref{sec:dynamics}) considers---as do other deterministic methods such as~\cite{allen2000comparison}---dynamics for fixed times $t\leq T$ as the number of vertices goes to infinity.
By contrast, standard questions in dynamical systems theory often concern the long term dynamics $t\to\infty$ as system properties (e.g., the underlying graph~$G$ if it has a network structure) are kept constant.
While deterministic methods are extremely useful (e.g., due to the decreased computational complexity as discussed in Section~\ref{sec:dynamics}), applying dynamical systems techniques to~\eqref{eq:det_process} to understand the stochastic evolution~\eqref{eq:stoch_process} comes with pitfalls as we illustrate with two concrete examples. Both examples are natural and
show how the small-$t$
large-$G$ regime can exhibit significantly different behavior
than the large-$t$ small-$G$ regime.

The first concrete example, already discussed in Section~\ref{sec:dynamics},
is the SIS model on finite graphs.
The healthy state~$X=0$ can be absorbing, meaning that it is eventually
reached and never left in the stochastic dynamics, while in contrast
the corresponding state~$x=0$ can be unstable in the deterministic dynamics.

The second concrete example concerns dynamic random graphs~$X(t)$
with \er initial conditions~$\G(n,p)$.
As explained in Example~\ref{ex:er_2}, Proposition~\ref{prop:diag_con}
shows that edges in~$X(t)$ are approximately identically distributed
for~$0<t<\O(\log(n))$: For each~$t>0$ the vector~$x(t)$ converges the diagonal of~$[0,1]^E$ as~$n\to\infty$. In contrast, assuming
regularity in~$f$ and~$g$ and linearizing the deterministic system~$x(t)$
reveals that the diagonal can be linearly unstable.
In other words, the limits~$\lim_{n\to\infty}$
and~$\lim_{t\to\infty}$ are not interchangeable.


\bibliographystyle{unsrt}
\bibliography{refs}

\begin{thebibliography}{10}

\bibitem{cooper2013coalescing}
Colin Cooper, Robert Elsasser, Hirotaka Ono, and Tomasz Radzik.
\newblock Coalescing random walks and voting on connected graphs.
\newblock {\em SIAM Journal on Discrete Mathematics}, 27(4):1748--1758, 2013.

\bibitem{hodgkinson2020normal}
Liam Hodgkinson, Ross McVinish, and Philip~K Pollett.
\newblock Normal approximations for discrete-time occupancy processes.
\newblock {\em Stochastic Processes and their Applications},
  130(10):6414--6444, 2020.

\bibitem{mcvinish2022dominating}
Ross McVinish.
\newblock Dominating occupancy processes by the independent site approximation.
\newblock {\em Electronic Communications in Probability}, 27:1--10, 2022.

\bibitem{avena2022discordant}
Luca Avena, Rangel Baldasso, Rajat~Subhra Hazra, Frank~den Hollander, and
  Matteo Quattropani.
\newblock Discordant edges for the voter model on regular random graphs.
\newblock {\em arXiv preprint arXiv:2209.01037}, 2022.

\bibitem{donnelly1983finite}
Peter Donnelly and Dominic Welsh.
\newblock Finite particle systems and infection models.
\newblock In {\em Mathematical Proceedings of the Cambridge Philosophical
  Society}, volume~94, pages 167--182. Cambridge University Press, 1983.

\bibitem{braunsteins2023sample}
Peter Braunsteins, Frank den Hollander, and Michel Mandjes.
\newblock A sample-path large deviation principle for dynamic
  {Erd{\H{o}}s--R{\'e}nyi} random graphs.
\newblock {\em The Annals of Applied Probability}, 33(4):3278--3320, 2023.

\bibitem{hanneke2010discrete}
Steve Hanneke, Wenjie Fu, and Eric~P Xing.
\newblock Discrete temporal models of social networks.
\newblock 2010.

\bibitem{crane2016dynamic}
Harry Crane.
\newblock Dynamic random networks and their graph limits.
\newblock 2016.

\bibitem{crane2017exchangeable}
Harry Crane.
\newblock Exchangeable graph-valued feller processes.
\newblock {\em Probability Theory and Related Fields}, 168:849--899, 2017.

\bibitem{vcerny2020markovian}
Ji{\v{r}}{\'\i} {\v{C}}ern{\`y} and Anton Klimovsky.
\newblock Markovian dynamics of exchangeable arrays.
\newblock In {\em Genealogies of Interacting Particle Systems}, pages 209--228.
  World Scientific, 2020.

\bibitem{kontorovich2017concentration}
Aryeh Kontorovich and Maxim Raginsky.
\newblock Concentration of measure without independence: a unified approach via
  the martingale method.
\newblock In {\em Convexity and Concentration}, pages 183--210. Springer, 2017.

\bibitem{talagrand1996new}
Michel Talagrand.
\newblock A new look at independence.
\newblock {\em The Annals of Probability}, pages 1--34, 1996.

\bibitem{ledoux2001concentration}
Michel Ledoux.
\newblock {\em The concentration of measure phenomenon}.
\newblock Number~89. American Mathematical Soc., 2001.

\bibitem{durrett1994particle}
Richard Durrett and C~Neuhauser.
\newblock Particle systems and reaction-diffusion equations.
\newblock {\em The Annals of Probability}, pages 289--333, 1994.

\bibitem{hassin2001distributed}
Yehuda Hassin and David Peleg.
\newblock Distributed probabilistic polling and applications to proportionate
  agreement.
\newblock {\em Information and Computation}, 171(2):248--268, 2001.

\bibitem{holley1974recent}
Richard Holley.
\newblock Recent results on the stochastic ising model.
\newblock {\em The Rocky Mountain Journal of Mathematics}, 4(3):479--496, 1974.

\bibitem{holley1986logarithmic}
Richard Holley and Daniel~W Stroock.
\newblock Logarithmic sobolev inequalities and stochastic ising models.
\newblock 1986.

\bibitem{shante1971introduction}
Vinod~KS Shante and Scott Kirkpatrick.
\newblock An introduction to percolation theory.
\newblock {\em Advances in Physics}, 20(85):325--357, 1971.

\bibitem{liggett1985interacting}
Thomas~Milton Liggett and Thomas~M Liggett.
\newblock {\em Interacting particle systems}, volume~2.
\newblock Springer, 1985.

\bibitem{liggett1999stochastic}
Thomas~M Liggett.
\newblock {\em Stochastic interacting systems: contact, voter and exclusion
  processes}, volume 324.
\newblock springer science \& Business Media, 1999.

\bibitem{pare2018analysis}
Philip~E Par{\'e}, Ji~Liu, Carolyn~L Beck, Barrett~E Kirwan, and Tamer
  Ba{\c{s}}ar.
\newblock Analysis, estimation, and validation of discrete-time epidemic
  processes.
\newblock {\em IEEE Transactions on Control Systems Technology}, 28(1):79--93,
  2018.

\bibitem{rodriguez2018use}
Carlos Rodr{\'\i}guez~Lucatero and Luis~Angel Alarc{\'o}n~Ramos.
\newblock Use of enumerative combinatorics for proving the applicability of an
  asymptotic stability result on discrete-time sis epidemics in complex
  networks.
\newblock {\em Mathematics}, 7(1):30, 2018.

\bibitem{ahn2014mixing}
Hyoung~Jun Ahn and Babak Hassibi.
\newblock On the mixing time of the sis markov chain model for epidemic spread.
\newblock In {\em 53rd IEEE Conference on Decision and Control}, pages
  6221--6227. IEEE, 2014.

\bibitem{wormald1995differential}
Nicholas~C Wormald.
\newblock Differential equations for random processes and random graphs.
\newblock {\em The Annals of Applied Probability}, pages 1217--1235, 1995.

\bibitem{wormald1999differential}
Nicholas~C Wormald.
\newblock The differential equation method for random graph processes and
  greedy algorithms.
\newblock {\em Lectures on Approximation and Randomized Algorithms},
  73(155):0943--05073, 1999.

\bibitem{bennett2023extending}
Patrick Bennett and Calum MacRury.
\newblock Extending {W}ormald's differential equation method to one-sided
  bounds.
\newblock {\em arXiv preprint arXiv:2302.12358}, 2023.

\bibitem{duckworth2002minimum}
William Duckworth and Nicholas~C Wormald.
\newblock Minimum independent dominating sets of random cubic graphs.
\newblock {\em Random Structures \& Algorithms}, 21(2):147--161, 2002.

\bibitem{benaim2003deterministic}
Michel Bena{\"\i}m and J{\"o}rgen~W Weibull.
\newblock Deterministic approximation of stochastic evolution in games.
\newblock {\em Econometrica}, 71(3):873--903, 2003.

\bibitem{sussmann1978gap}
H{\'e}ctor~J Sussmann.
\newblock On the gap between deterministic and stochastic ordinary differential
  equations.
\newblock {\em The Annals of Probability}, pages 19--41, 1978.

\bibitem{brand2015rapid}
Samuel~PC Brand, Michael~J Tildesley, and Matthew~J Keeling.
\newblock Rapid simulation of spatial epidemics: A spectral method.
\newblock {\em Journal of Theoretical Biology}, 370:121--134, 2015.

\bibitem{chung1989quasi}
Fan~RK Chung, RL~Graham, and RM~Wilson.
\newblock Quasi-random graphs.
\newblock {\em Combinatorica}, 9:345--362, 1989.

\bibitem{lovasz2012large}
L{\'a}szl{\'o} Lov{\'a}sz.
\newblock {\em Large networks and graph limits}, volume~60.
\newblock American Mathematical Soc., 2012.

\bibitem{ulam1947}
SM~Ulam and J~von Neumann.
\newblock On combination of stochastic and deterministic processes.
\newblock {\em Bulletin of the American Mathematical Society}, 53:1120, 1947.

\bibitem{jafarizadeh2001hierarchy}
MA~Jafarizadeh, S~Behnia, S~Khorram, and H~Nagshara.
\newblock Hierarchy of chaotic maps with an invariant measure.
\newblock {\em Journal of Statistical Physics}, 104:1013--1028, 2001.

\bibitem{BERNER20231}
Rico Berner, Thilo Gross, Christian Kuehn, Jürgen Kurths, and Serhiy Yanchuk.
\newblock Adaptive dynamical networks.
\newblock {\em Physics Reports}, 1031:1--59, 2023.

\bibitem{doi:10.1137/21M1414024}
Christian Bick, Elizabeth Gross, Heather~A. Harrington, and Michael~T. Schaub.
\newblock What are higher-order networks?
\newblock {\em SIAM Review}, 65(3):686--731, 2023.

\bibitem{hatami2014limits}
Hamed Hatami, L{\'a}szl{\'o} Lov{\'a}sz, and Bal{\'a}zs Szegedy.
\newblock Limits of locally--globally convergent graph sequences.
\newblock {\em Geometric and Functional Analysis}, 24(1):269--296, 2014.

\bibitem{van2021giant}
Remco van~der Hofstad.
\newblock The giant in random graphs is almost local.
\newblock {\em arXiv preprint arXiv:2103.11733}, 2021.

\bibitem{Backhausz_Szegedy_2022}
Ágnes Backhausz and Balázs Szegedy.
\newblock Action convergence of operators and graphs.
\newblock {\em Canadian Journal of Mathematics}, 74(1):72–121, 2022.

\bibitem{allen2000comparison}
Linda~JS Allen and Amy~M Burgin.
\newblock Comparison of deterministic and stochastic {SIS} and {SIR} models in
  discrete time.
\newblock {\em Mathematical Biosciences}, 163(1):1--33, 2000.

\end{thebibliography}

\end{document}